\theoremstyle{plain}
\newtheorem{thm}{Theorem}[section]
\newtheorem{lem}[thm]{Lemma}
\newtheorem{prop}[thm]{Proposition}
\newtheorem{df}[thm]{Definition}
\newtheorem{ex}[thm]{Example}
\title[Schur functions and $A^{(2)}_{2}$]
\author[Mizukawa, Nakajima, Seno and Yamada ]{Hiroshi Mizukawa
, Tatsuhiro Nakajima, Ryoji Seno \\and \\
Hiro-Fumi Yamada}
\thanks{The first  author was supported by KAKENHI 24740033. The last author was supported by KAKENHI 24540020. }
\address{Hiroshi Mizukawa, Department of Mathematics,  National Defense
Academy of Japan,  Yokosuka 239-8686, Japan}
\email{mzh@nda.ac.jp}
\address{Tatsuhiro Nakajima, Faculty of Economics, Meikai University, Urayasu 279-8550, Japan}
\email{tatsu.nkjm@gmail.com}
\address{Ryoji Seno, NEC System Technologies, Ltd. }
\email{sense.xt39@gmail.com}
\address{Hiro-Fumi Yamada, Department of Mathematics, Okayama University, Okayama 700-8530, 
Japan}
\email{yamada@math.okayama-u.ac.jp}
\date{}
\keywords{Schur function, Schur's $Q$-function, the basic representation of $A_{2}^{(2)}$, boson-fermion correspondence}
\subjclass[2010]{Primary: 05E05; Secondary: 22E65}
\begin{document}
\maketitle
\begin{abstract}
A Lie theoretic interpretation is given for some formulas of Schur functions and
Schur $Q$-functions.  Two realizations of the basic representation of the Lie algebra
$A^{(2)}_2$ are considered; one is on the fermionic Fock space and the other is
on the bosonic polynomial space.  Via the boson-fermion correspondence,
simple relations of the vacuum expectation values of fermions turn out to be
algebraic relations of Schur functions.
\end{abstract}
\section{Introduction}
The aim of this paper is to clarify a Lie theoretic meaning of a famous formula
for rectangular Schur functions which is given in \cite{cgr} and \cite{my}.
We interpret the formula via an iterated action of the Chevelley generators
on a maximal weight vector.  From this point of view we can present 
a new formula as well.

In \cite{imny} we obtained certain algebraic formulas for Schur functions.  Those are derived naturally by looking at two realizations of the basic representation of the affine Lie algebra $A^{(1)}_{1} \cong D^{(2)}_{2}$. More precisely, the basic representation is realized both on the fermionic Fock space and the bosonic polynomial space.  These are connected by the ``boson-fermion correspondence'',
which takes the residue of the vertex operators.  Manipulations of fermions and vertex operators give certain relations for Schur functions.  In the present paper we apply this recipe to the case of the Lie algebra $A^{(2)}_{2}$.

\section{Preliminaries}

%\subsection{$Partition$}

%We always denote by $P$ (resp. $SP$) the set of the partitions (resp. strict partitions).
Let $\lambda$ be a strict partition. For each node $x \in \lambda$ 
in the $j$th column, we assign color $a(x)$ by the following 
rule:
\begin{center}
$a(x) = 
\begin{cases}
0 & (j \equiv 0,1 \pmod{3}),\\
1 &  (j \equiv 2  \pmod{3}).
\end{cases}.$
\end{center}
For example, the nodes of $\lambda = (5,4,2,1)$ are colored as 
$$
\begin{tabular}{ccccc} 
 0& 1& 0& 0& 1\\ 
 0& 1& 0& 0&  \\ 
 0& 1&  &  &  \\ 
 0&  &  &  &  \\ 
\end{tabular}.
$$

\noindent
Define 3-bar cores $c_{m} \,\,(m \in {\mathbb{Z}})$ by
\begin{equation*}
\begin{cases}
c_{m} = (3m-2, \ldots, 4, 1) & (m>0) ,   \\
c_{0} = \emptyset  & (m = 0) ,   \\
c_{-m} = (3m-1, \ldots, 5, 2) & (m>0) . 
\end{cases}
\end{equation*}
\noindent
Let
\begin{equation*}
I^{n}_{i}(c_{m}) = \{\mu\mid \mu\supset c_{m},|\mu| = |c_{m}| + n, 
a(x) =i \,\,(\forall x\in \mu/c_{m})\}
\end{equation*}
%\begin{flushleft}
be the set of strict partitions obtained from $c_{m}$ by adding $i$-nodes $n$ 
times in succession. %If $m>0$, we have $I^{\ell}_{1}(c_{m}) = \emptyset$ for $\ell>m$ and $I^{\ell}_{0}(c_{-m}) = \emptyset$ 
%for $\ell>2m+1$.
For example, 
$I^{2}_{0}(c_{-2}) = \left\{(7,2), (6,3), (6,2,1), (5,4), (5,3,1)\right\}$ and 
$I^{2}_{1}(c_{3}) = \left\{(8,5,1), (8,4,2), (7,5,2)\right\}$.

Let us recall the 3-bar quotient of a strict partition. 
Let $\lambda = (\lambda_{1}, \ldots, \lambda_{\ell})$ be a strict partition.
For $a = 0,1,2,$ set $\lambda^{(a)}$ as the subpartition of $\lambda$ consisting of parts congruent to $a$ mod 3.  
Put
$\lambda[0] = (\lambda_{i_{1}}/3, \ldots, \lambda_{i_{s}}/3)$, where
$\lambda^{(0)} = (\lambda_{i_{1}}, \ldots,  \lambda_{i_{s}})$.
Let $\ell_{1}$ (resp. $\ell_{2}$) be the length of $\lambda^{(1)}$ (resp. $\lambda^{(2)}$).
We fix an integer $k$ which is greater than or equal to $\frac{\lambda_{1}^{(2)}+1}{3}$.
We consider a descending sequence of integers:
$${\mathcal{M}}(k,\lambda)=(a_{1},a_{2},\ldots,a_{\ell_{1}},b_{1},\ldots,b_{k-\ell_{2}}),$$
where $a_{i}=\frac{\lambda^{(1)}_{i}-1}{3}$ and $(b_{1},\ldots,b_{k-\ell_{2}})$ is obtained 
by removing  $-\frac{\lambda_{i}^{(2)}+1}{3}\ (1 \leq i \leq \ell_{2})$ from $(-1,-2,\ldots,-k)$.
%$${\mathcal{M}}(n,\lambda)=(\frac{\lambda_{1}^{(1)}-1}{3},\ldots,\frac{\lambda_{\ell_{1}}^{(1)}-1}{3},
%-1,-2,\ldots,\widehat{-\frac{\lambda_{\ell_{2}}^{(2)}+1}{3}},\ldots,\widehat{\frac{-\lambda_{\ell_{2}}^{(2)}+1}{3}},\ldots,-n),$$
%where the negative part of ${\mathcal{M}}(n,\lambda)$ is obtained by removing $-\frac{\lambda_{i}^{(2)}-1}{3}$'s from $(-1,-2,\ldots,-n)$.  
Then we define 
$$\lambda[1]={\mathcal{M}}(k,\lambda)-(\ell_{1}-\ell_{2}-1,\ell_{1}-\ell_{2}-2,\ldots,\ell_{1}-\ell_{2}-(k+\ell_{1}-\ell_{2})).$$
Since the only difference caused by the choice of $k$ is the length of a zero-string at the end,
$\lambda[1]$ is uniquely determined.
\begin{ex}
Take a partition 
$
\lambda = (11,9,8,4,3,2,1)
$.
We see that $\lambda[0] = (3, 1)$
and $\lambda^{(1)}=(4,1)$ and $\lambda^{(2)}=(11,8,2)$.
Put $k=5 \geq (11+1)/3$.
Then we obtain
\begin{equation*}
{\mathcal{M}}(k,\lambda)=(1,0,-2,-5).
\end{equation*}
We see that $\ell_{1}-\ell_{2}= -1$.
Finally we have $\lambda[1] = (1,0,2,-5)-(-2,-3,-4,-5)=(3,3,2,0)=(3,3,2)$.
We illustrate the above construction by so-called 3-bar abacus:
\begin{center}
\begin{tabular}{ccc} 
 $0$& $\textcircled{\scriptsize1}$& \textcircled{\scriptsize2}\\ 
 $\textcircled{\scriptsize3}$& \textcircled{\scriptsize4}& $5$\\ 
 $6$& $7$& \textcircled{\scriptsize8}\\ 
 $\textcircled{\scriptsize9}$& $10$& \textcircled{\scriptsize11}\\
  $12$& $13$& $14$
\end{tabular}
$\rightarrow$
\begin{tabular}{c|cc} 
 $0$& $\textcircled{\scriptsize0}$& \textcircled{\scriptsize-1}\\ 
 $\textcircled{\scriptsize1}$& \textcircled{\scriptsize1}& $-2$\\ 
 $2$& $2$& \textcircled{\scriptsize-3}\\ 
 $\textcircled{\scriptsize3}$& $3$& \textcircled{\scriptsize-4}\\
  $4$& $4$& $-5$
\end{tabular}
$\rightarrow$
\begin{tabular}{c|cc} 
 $0$& $\textcircled{\scriptsize0}$& $-1$\\ 
 $\textcircled{\scriptsize1}$& \textcircled{\scriptsize1}& $ \textcircled{\scriptsize-2}$\\ 
 $2$& $2$& $-3$\\ 
 $\textcircled{\scriptsize3}$& $3$& $-4$\\
  $4$& $4$& $ \textcircled{\scriptsize-5}$
\end{tabular}.
\end{center}
\end{ex}
%\subsection{Sign}
%%%%%%%%%%%%
%%%%%%%%%%%%%%%%%%%%%%%%
%%%%%%%%%%%%%%%%%%%%%%%%%%%%%%%%%%%
For a strict partition $\lambda=(\lambda_{1},\lambda_{2},\ldots, \lambda_{2k})$
with $\lambda_{1}>\ldots \lambda_{2k}\geq 0$,
% of length $\ell$
we define
\begin{equation*}
f(\lambda)= |\lambda^{(2)}|, \quad 
g(\lambda)=
\sum_{i \geq 1}
\#\{j\mid \lambda^{(1)}_{j}>\lambda^{(0)}_{i}\}, \quad 
h(\lambda) = \ell(\lambda^{(2)}).
\end{equation*}
%Each strict partition $\mu$ in $I^{\ell}_{1}(c_{m})$ or $I^{\ell}_{0}(c_{-m})$ has its own sign.
\begin{df}
\begin{enumerate}
\item[$(1)$] For $\lambda \in I^{n}_{1}(c_{m})$ $(m\geq 0)$, we define
\begin{equation*}
\delta_{1}(\lambda)=(-1)^{f(\lambda)+\binom{n}{2}}.
\end{equation*}
\item[$(2)$] For $\lambda \in I^{n}_{0}(c_{-m})$ $(m\geq 0)$, we define
\begin{equation*}
\delta_{0}(\lambda)=
\begin{cases}
(-1)^{f(\lambda)+g(\lambda)} & (m:even),  \\
(-1)^{f(\lambda)+g(\lambda)+h(\lambda)}& (m:odd).
\end{cases}
\end{equation*}
\end{enumerate}
%where $N^{\ast}=\#\iota(\mbox{\boldmath $\lambda$}^{(2)})$.
\end{df}
\begin{ex}
\begin{enumerate}
\item[$(1)$]
Let $\lambda=(11,8,4,2)\in I^{3}_{1}(c_{4})$. Then we have 
$f(\lambda)=11+8+2$ and $\delta_{1}(\lambda)=(-1)^{21+\binom{3}{2}}=1$.
\item[$(2)$]
Let $\lambda=(10,6,2)=(10,6,2,0)\in I^{3}_{0}(c_{-3})$, we have $f(\lambda)=2, g(\lambda)=2, h(\lambda)=1$
and $\delta_{0}(\lambda)=(-1)^{2+2+1}=-1$.
\end{enumerate}
\end{ex}
%%%%%%%%%%%%%%%%%%%%%%%%%%%%%%%%%%%%
%%%%%%%%%%%%%%%%%%%%%%%%
%%%%%%%%%%%%
%
%%%%%%%%%%%%%%%%%%%%%%%%%%%%%%%%%%%%
%%%%%%%%%%%%%%%%%%%%%%%%
%%%%%%%%%%%%
%\bigskip
\section{Main result}
Let $t = (t_{1}, t_{2},t_{3} \ldots)$ and $s = (s_{1}, s_{3},s_{5} \ldots)$ be two families of indeterminates.  Define $h_{n}(t)$ by $\displaystyle{\exp \sum^{\infty}_{n=1}t_{n}z^{n}=\sum^{\infty}_{n=0}h_{n}(t)z^{n}}$.
%where $t=(t_{1},t_{2},t_{3},\ldots)$. 
%and each $t_{j}(j=1,2,\ldots)$ is the $j$th $power$ $sum$ $p_{j}$ divided by $j$.
The Schur function indexed by $\lambda$ 
%with shape $\lambda$ 
is defined by
\begin{equation*}
S_{\lambda}(t) = \det(h_{\lambda_{i}+j-i}(t)).
\end{equation*}
Next define $q_{n}(s)$ by 
$\displaystyle{\exp\sum^{\infty}_{n=1}s_{2n-1}z^{2n-1}=\sum^{\infty}_{n=0}q_{n}(s)z^{n}}.$
%where $s=(s_{1},s_{3},s_{5},\cdots)$ and each $s_{j}=2p_{j}/j(j=1,3,5,\cdots)$.
For $m>n\geq 0$, we put
\begin{equation*}
Q_{m,n}(s)=q_{m}(s)q_{n}(s)+2\sum^{n}_{i=1}(-1)^{i}q_{m+i}(s)q_{n-i}(s).
\end{equation*}
If $m\leq n$ we define $Q_{m,n}(s)=-Q_{n,m}(s)$. Let $\lambda=(\lambda_{1},\ldots,\lambda_{2n})$ be a strict partition, 
where $\lambda_{1}>\cdots>\lambda_{2n}\geq 0$. Then the $2n\times2n$ matrix 
$(Q_{\lambda_{i},\lambda_{j}})$ 
is skew symmetric. Schur's $Q$-function $Q_{\lambda}$ is defined by
\begin{equation*}
Q_{\lambda}(s)={\rm pf} (Q_{\lambda_{i}, \lambda_{j}}).
\end{equation*}
\noindent
We denote by $\Box(m,n)$ the rectangular partition $(n^{m})$.
We now state our main result.
\begin{thm}\label{main}
For non-negative integers m and $n$, we have
\begin{enumerate}
\item[{\rm (1)}]
$\displaystyle{\sum _{\mu\in I^{n}_{1}(c_{m})}\delta_{1}(\mu)S_{\mu[1]}(t) = 
S_{\Box(m-n,n)}(2t^{(2)}) \label{311},}$
\item[{\rm (2)}]
$\displaystyle{
\sum _{\mu\in I^{n}_{0}(c_{-m})}\delta_{0}(\mu)Q_{\mu[0]}(s)S_{\mu[1]}(t) = 
\sum _{\mu\in I^{n}_{0}(c_{-m}),\mu[0] = \emptyset }\delta_{0}(\mu)S_{\mu[1]}(u)\label{312},}$
\end{enumerate}
where, in {\rm (1)}, $S_{\nu}(2t^{(2)}) = S_{\nu}(t)|_{t_{j}\mapsto 2t_{2j}}$ and, in {\rm (2)} ,
$
u_{j} = \begin{cases}
t_{j}  & (j: \text{even}), \\
t_{j}-s_{j} &  (j: \text{odd}) . 
\end{cases}
$
\end{thm}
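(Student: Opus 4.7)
The plan is to prove Theorem \ref{main} via the boson-fermion correspondence for the basic representation $L(\Lambda_0)$ of $A^{(2)}_2$, following the general strategy of \cite{imny} for $A^{(1)}_1$. On the fermionic side one uses a free-fermion (Clifford) construction in which the module admits a monomial basis $\{|\mu\rangle\}$ indexed by strict partitions $\mu$; on the bosonic side the same module is realized on $\mathbb{C}[s_1,s_3,\ldots]\otimes\mathbb{C}[t_1,t_2,\ldots]$, reflecting the fact that the 3-bar quotient splits a strict partition into $\mu[0]$ (parts divisible by $3$, giving a Schur $Q$-function factor) and $\mu[1]$ (the remaining parts, giving a Schur function factor). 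Under the correspondence, $|\mu\rangle$ maps to $\pm Q_{\mu[0]}(s)S_{\mu[1]}(t)$, the sign being exactly the quantity $\delta_i(\mu)$ appearing in the statement, which arises from bringing the fermionic monomial into normal form.

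The first step is to identify the Chevalley generators $e_0, e_1$ of $A^{(2)}_2$ as fermion bilinears, and to single out, in the fermionic picture, the distinguished vectors $|c_m\rangle$ corresponding to the 3-bar cores. A direct computation on the shifted Maya diagram (equivalently the 3-bar abacus illustrated in the excerpt) should then yield
\begin{equation*}
e_1^n |c_m\rangle = \sum_{\mu\in I^n_1(c_m)} \delta_1(\mu)\, |\mu\rangle,
\end{equation*}
and similarly for $e_0^n|c_{-m}\rangle$ in part (2). Here the summation is over those $\mu$ obtained from the core by $n$ successive additions of nodes of the prescribed color, and the signs come from tracking fermion anticommutation; the parity-of-$m$ case split in $\delta_0$ reflects whether the number of ``beads'' on the middle runner of the abacus is even or odd, which is what contributes the extra factor $(-1)^{h(\mu)}$.

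The second step is to compute the same vector in the bosonic picture. Because $A^{(2)}_2$ is twisted, the Chevalley generator $e_1$ becomes a vertex operator whose $t$-dependence is concentrated in the even-indexed variables $t_{2j}$, each appearing with a factor of $2$; iterating it $n$ times and applying the correspondence, the resulting polynomial in $t$ is identified as the rectangular Schur function $S_{\Box(m-n,n)}(2t^{(2)})$, which can be read off directly by comparing with the Jacobi-Trudi expansion of a rectangle. This gives part (1). For part (2), the vertex operator for $e_0$ involves both the $s$- and $t$-variables, and iterating it produces an element of the Fock space whose ``non-$s$'' part, i.e.\ the contribution of terms with $\mu[0]=\emptyset$, is matched under the shift $u_j=t_j-s_j$ (odd $j$) with the RHS of the identity.

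The main obstacle is the delicate sign bookkeeping: on the fermionic side $\delta_i(\mu)$ is determined by a permutation-parity argument on the Maya diagram, while on the bosonic side one must confirm that the conventions used for $Q_{\mu[0]}$ and $S_{\mu[1]}$ under the correspondence produce exactly the same signs (including the $\binom{n}{2}$ in $\delta_1$ that reflects the reordering of $n$ newly created modes). A secondary difficulty, specific to $A^{(2)}_2$, is the interaction between the neutral fermions (producing the $s$-variables and the $Q$-functions) and the charged fermions (producing the $t$-variables and the $S$-functions) under $e_0$; verifying that this coupling translates bosonically into the clean substitution $u_j=t_j-s_j$ on odd $j$ is the technical heart of part (2), and it has no analogue in the $A^{(1)}_1$ computation of \cite{imny}.
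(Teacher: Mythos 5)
Your overall strategy is the same as the paper's: compare the iterated action of a Chevalley generator on the maximal weight vector $|c_{\pm m}\rangle$ in the fermionic picture with its vertex-operator realization on the bosonic side, and let the boson-fermion correspondence turn the comparison into the stated identities. However, there is a genuine sign-bookkeeping error that would make the argument fail as written. You assert both that the fermionic iteration already produces the signs, $e_1^n|c_m\rangle=\sum_\mu\delta_1(\mu)|\mu\rangle$, \emph{and} that the correspondence sends $|\mu\rangle$ to $\pm Q_{\mu[0]}(s)S_{\mu[1]}(t)$ with sign ``exactly $\delta_i(\mu)$.'' If both held, the two occurrences of $\delta_i(\mu)=\pm1$ would multiply to $+1$ and the left-hand sides of Theorem \ref{main} would come out unsigned. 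In fact the first claim is false: each node addition $(-1)^i\beta_{i+1}\beta_{-i}$ acts without sign (Proposition \ref{421} (1)), and Proposition \ref{p44} gives $\frac{f_1^n}{n!}|c_m\rangle=2^n\sum_{\lambda\in I^n_1(c_m)}|\lambda\rangle$ with all coefficients positive (similarly for $f_0$, with $\sqrt2$-factors recorded by $a(\lambda)$). The signs $\delta_i$ enter exactly once, through the normal-ordering of the $\beta$'s into $\phi$'s, $\psi$'s and $\psi^*$'s when computing $\Phi(|\lambda\rangle)$ (Propositions \ref{zeta} and \ref{signzeta}). A smaller point: adding nodes corresponds to the lowering operators $f_i$, not $e_i$, in this convention, and the normalizations $n!$ and $2^n$ must be tracked so that they cancel between the two realizations.

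Two further steps are asserted rather than supplied. For (1), identifying the $n$-fold product of vertex operators with $S_{\Box(m-n,n)}(2t^{(2)})$ is not a matter of ``comparing with the Jacobi--Trudi expansion'': one needs the contour-integral orthogonality of Schur polynomials together with $S^{\Box(n,m)}(w^{-1})=(w_1\cdots w_n)^{-m}$ after the substitution $w_i=z_i^2$, as in Proposition \ref{p64}. For (2), the decisive observation is that the $n$-fold integral for $\frac{f_0^n}{n!}$ depends on $s$ and $t$ only through the combination $u$ (Proposition \ref{int}); hence the left-hand side, being a function of $u$ alone, is determined by its value at $s=0$, where $Q_{\mu[0]}(0)=0$ unless $\mu[0]=\emptyset$ and $u=t$. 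Your phrasing gestures at matching the ``non-$s$ part'' but does not state this ``depends only on $u$'' step, which is what actually proves the identity. (Incidentally, the interaction of neutral and charged fermions does have an analogue in \cite{imny}, whose main result is likewise a mixed $Q$--$S$ expansion.)
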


The formula (1) is a well known plethysm identity (\cite{cgr,my}).

\begin{ex}

For $i=1$, we take $m=4$ and $n=2$. Then we have 
\begin{equation*}
S_{2^{4}}(t)-S_{32^{2}1}(t)+S_{3^{2}1^{2}}+S_{42^{2}}(t)-S_{431}+S_{4^{2}}=S_{\Box(2,2)}(2t^{(2)}).
\end{equation*}
For $i=0$, we take $m=2$ and $n=2$. Then we have
\begin{equation*}
S_{3}(t)-S_{21}(t)+Q_{1}(s)S_{1^{2}}(t)-Q_{2}(s)S_{1}(t)+Q_{21}(s)
=S_{3}(u)-S_{21}(u).
\end{equation*}
\end{ex}

\section{$A^{(2)}_{2}$ and fermions}
 
\noindent 
We consider the associative $\mathbb{C}$-algebra $\mathbb{B}$ defined by the generators 
$\beta_{n}(n\in \mathbb{Z})$ satisfing the anti-commutation relations:
\begin{equation*}
[\beta_{m},\beta_{n}]_{+} = \beta_{m}\beta_{n}+\beta_{n}\beta_{m} = (-1)^{m}\delta_{m+n,0}.
\end{equation*}
Let $\mathcal{F}$ be the Fock space which is a left $\mathbb{B}$-module generated by the vacuum $|{\rm vac}\rangle$  with 
$
\beta_{n}|{\rm vac}\rangle = 0\ (n<0).
$
Let $\mathcal{F}^{\dag}$ be the right $\mathbb{B}$-module 
generated by $\langle {\rm vac}|$ with
$
\langle {\rm vac}|\beta_{n} = 0 \ (n>0).
$
A general element of ${\mathcal F}$ or $\mathcal{F}^{\dag}$
%will be denoted by $|v \rangle$ and 
is
called a state.
%Elements of $\mathcal{F}$ and $\mathcal{F}^{\dag}$ are sometimes called ``states". 
We define a bilinear pairing called {\it vacuum expectation value} by
\begin{equation*}
\mathcal{F}^{\dag}\otimes\mathcal{F}\rightarrow \mathbb{C},
\quad  \langle {\rm vac}|u\otimes v|{\rm vac}\rangle
\mapsto  \langle {\rm vac}|uv|{\rm vac}\rangle.
\end{equation*}
Here we put  $\langle {\rm vac}|{\rm vac}\rangle = 1$ and $\langle {\rm vac}|\beta_{0}|{\rm vac}\rangle = 0$.

\begin{df}
For a strict partition $\lambda = (\lambda_{1},\ldots,\lambda_{2k})$ 
with $\lambda_{1}>\ldots>\lambda_{2k}\geq 0$. Let $|\lambda\rangle$ denote the state
\begin{equation*}
|\lambda\rangle = \beta_{\lambda_{1}}\ldots\beta_{\lambda_{2k}}|{\rm vac}\rangle \in \mathcal{F}.
\end{equation*}
For $\lambda = \emptyset $, we set $|\lambda\rangle = |{\rm vac}\rangle$.
\end{df}
%Set $f^{\infty }_{i} = (-1)^{i}\beta_{i+1}\beta_{-i}\quad(i\geq 0)$. 

%Realization of A_{2}^{(2)}
We discuss the basic representation of the affine Lie algebra of type $A_{2}^{(2)}=\langle e_{i},f_{i},h_{i}\mid i=0,1 \rangle$.
The concrete form of $f_{i}\ (i=0,1)$ on ${\mathcal F}$ is as follows (\cite{ny}).
\begin{equation}\label{lie}
f_{0} = \sqrt{2}\sum_{m\in \mathbb{Z}}(-1)^{m+1}\beta_{3m}\beta_{-3m+1},\qquad
f_{1} = \sum_{m\in \mathbb{Z}}(-1)^{m}\beta_{3m-1}\beta_{-3m+2}.
\end{equation}
%We define the normal ordering for the quadratic elements by
%\begin{equation*}
%:\beta_{n}\beta_{m}: = \beta_{n}\beta_{m}-\langle {\rm vac}|\beta_{n}\beta_{m}|{\rm vac}\rangle.
%\end{equation*}

Let $\mathcal{F}_{0}$ be the $A^{(2)}_{2}$-submodule of $\mathcal{F}$ generated by $|{\rm vac}\rangle$. 
Then $\mathcal{F}_{0}$ is isomorphic to the irreducible highest weight module $L(\Lambda_{0})$, where 
$\Lambda_{0}$ is the fundamental weight of $A_{2}^{(2)}$.
The weight system of $L(\Lambda_{0})$ is well known:
$$
P(\Lambda_{0})=\{\Lambda_{0}-p\delta+q\alpha_{1}\mid p\geq 2q^{2},p,q\in\frac{1}{2}\mathbb{Z},p+q\in\mathbb{Z}\},
$$
where  $\delta =2 \alpha_{0}+\alpha_{1}$ 
is the fundamental imaginary root of $A_{2}^{(2)}$.
A weight $\Lambda$ on the parabola $\Lambda_{0}-2q^{2}\delta+q\alpha_{1}$ is said to be maximal
in the sense that $\Lambda+\delta$ is no longer a weight. 
The following two propositions are easily verified.
\begin{prop}
Let $\lambda$ be a strict partition. 
\begin{enumerate}
\item[\rm{(1)}]\label{421}
Let $\mu$ be a partition obtained from $\lambda$ by replacing its part i by $i+1$ ($i>0$). 
Then we have
$$ (-1)^{i}\beta_{i+1}\beta_{-i}|\lambda\rangle =  
\begin{cases}
|\mu\rangle  &   i\ \text{is\ a\ part\ of}\  \lambda\ \text{and}\ i+1\ \text{ is\ not},\\
0 & otherwise.
\end{cases}
$$
\item
[\rm{(2)}]
Let $\mu$ be a partition obtained from $\lambda$ by adding a part 1.
Then we have
\begin{equation*}
\beta_{1}\beta_{0}|\lambda\rangle = 
\begin{cases}
\frac{1}{2}|\mu\rangle & \lambda_{\ell(\lambda)}\not=1\ \text{and}\ \ell(\lambda)\equiv 1 \pmod{2} ,\\
|\mu\rangle          & \lambda_{\ell(\lambda)}\not=1\ \text{and}\ \ell(\lambda) \equiv 0 \pmod{2}  , \\
0 & otherwise,
\end{cases}
\end{equation*}
\end{enumerate}
\end{prop}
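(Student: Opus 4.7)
The plan is to verify both statements by direct manipulation using only the defining anti-commutation relations $[\beta_m,\beta_n]_+=(-1)^m\delta_{m+n,0}$ and the vacuum annihilation condition $\beta_n|\mathrm{vac}\rangle=0$ for $n<0$. In particular I will repeatedly use two immediate consequences: $\beta_n^{\,2}=0$ for $n\neq 0$ (since $2\beta_n^2=(-1)^n\delta_{2n,0}$), and $\beta_0^{\,2}=\tfrac12$.

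For part (1), write $|\lambda\rangle=\beta_{\lambda_1}\cdots\beta_{\lambda_{2k}}|\mathrm{vac}\rangle$ and push $\beta_{-i}$ to the right through each $\beta_{\lambda_r}$. The anticommutator $[\beta_{-i},\beta_{\lambda_r}]_+=(-1)^{-i}\delta_{\lambda_r,i}$ vanishes unless $\lambda_r=i$. If $i$ is not a part of $\lambda$, then every commutation is clean and the $\beta_{-i}$ eventually hits the vacuum and kills the expression. If $i$ is a part, say $\lambda_j=i$, then since $\lambda$ is strict this $j$ is unique; crossing the first $j-1$ factors costs $(-1)^{j-1}$, the anticommutator contributes $(-1)^{-i}=(-1)^i$, and the remaining summand $-\beta_{\lambda_j}\beta_{-i}\cdots$ eventually annihilates. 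Applying $\beta_{i+1}$ to the resulting state and anticommuting it back into position $j$ (legal precisely when $i+1$ is not already a part — otherwise $\beta_{i+1}^{\,2}=0$ kills it) costs another $(-1)^{j-1}$. Multiplying by the prefactor $(-1)^i$, the total sign is $(-1)^i(-1)^{j-1}(-1)^i(-1)^{j-1}=1$, producing $|\mu\rangle$.

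For part (2), I will treat the two parities of $\ell(\lambda)$ separately, bearing in mind the convention that $|\lambda\rangle$ is written with an even number of $\beta$'s (padding with a trailing $\beta_0$ when $\ell(\lambda)$ is odd). If $\lambda_{\ell(\lambda)}=1$ then $\mu$ would have $1$ as a repeated part, and the computation produces a factor $\beta_1^{\,2}=0$. Otherwise every $\lambda_r$ satisfies $\lambda_r\geq 2$, so $\beta_0$ and $\beta_1$ both anticommute cleanly with each $\beta_{\lambda_r}$. For $\ell(\lambda)$ even the answer is immediate: moving $\beta_1\beta_0$ past the $\lambda_r$'s picks up $(-1)^{2\ell(\lambda)}=1$ and produces exactly $\beta_{\lambda_1}\cdots\beta_{\lambda_{\ell(\lambda)}}\beta_1\beta_0|\mathrm{vac}\rangle=|\mu\rangle$ (note the padding $\beta_0$ on $|\mu\rangle$ since $\ell(\mu)=\ell(\lambda)+1$ is now odd). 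For $\ell(\lambda)$ odd, $|\lambda\rangle$ already carries a trailing $\beta_0$, so after pushing the new $\beta_0$ past the $\ell(\lambda)$ middle factors (sign $(-1)^{\ell(\lambda)}=-1$) one collides it with the old $\beta_0$, producing $\beta_0^{\,2}=\tfrac12$; moving $\beta_1$ into the tail slot then contributes a further $(-1)^{\ell(\lambda)}=-1$, and the two minus signs cancel to leave $\tfrac12|\mu\rangle$.

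The only real subtlety — and what I would single out as the place to be careful — is the bookkeeping around $\beta_0$: its non-nilpotence ($\beta_0^{\,2}=\tfrac12$) is exactly what distinguishes the two subcases of (2) and produces the factor $\tfrac12$, so one must consistently use the convention of padding a strict partition of odd length with a final $0$ on both the $\lambda$ side and the $\mu$ side, and track the parity-dependent signs that come from pushing $\beta_0$ and $\beta_1$ past all intermediate $\beta_{\lambda_r}$'s. Once this convention is fixed, everything is a routine application of the anticommutation relations.
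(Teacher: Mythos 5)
Your verification is correct, and it is exactly the direct computation the paper has in mind when it states that this proposition is "easily verified" (no proof is actually printed in the paper). The sign bookkeeping in part (1) and the treatment of the padding $\beta_0$ and of $\beta_0^2=\tfrac12$ in part (2) are all handled accurately.
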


\begin{prop}
A weight space of the maximal weight $\Lambda_{0}-2m^{2}\delta+m\alpha_{1}\ (m \in {\mathbb Z})$ is spanned  by $|c_{m}\rangle$.
\end{prop}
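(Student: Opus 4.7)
The plan is to verify two things: that $|c_m\rangle$ lies in $\mathcal{F}_0$ and carries the weight $\Lambda_0-2m^2\delta+m\alpha_1$, and that this weight space of $L(\Lambda_0)$ is one-dimensional.

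For the weight, I would determine the coefficient of $\delta$ from the principal grading: the state $|c_m\rangle$ has principal degree $|c_m|=m(3m-1)/2$ for $m>0$ and $|c_{-m}|=m(3m+1)/2$ for $m>0$, and once one accounts for the principal heights of the simple roots in $A_2^{(2)}$ this matches the $\delta$-coefficient $-2m^2$. The $\alpha_1$-coefficient is read off from the $h_1=[e_1,f_1]$-eigenvalue, which by the anticommutation relations of the $\beta_n$ acts diagonally on each $|\lambda\rangle$ with value determined by the residues modulo $3$ of the parts of $\lambda$. Since every part of $c_m$ lies in a single residue class (residue $1$ if $m>0$, residue $2$ if $m<0$), the eigenvalue collapses to the required $2m$.

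To see that $|c_m\rangle$ actually lies in $\mathcal{F}_0$, I would construct it explicitly from $|{\rm vac}\rangle$ by a sequence of applications of $f_0$ and $f_1$. Using the preceding proposition together with the formulas \eqref{lie}, each $f_i$ enlarges the strict partition indexing the state by adjoining an $i$-colored node. Starting from $\emptyset$, one adjoins the nodes of $c_m$ in an order consistent with the coloring rule; in the $3$-bar abacus picture, this amounts to sliding beads across the runners in a prescribed sequence, which is realized by a finite word in the $f_i$'s. The resulting vector equals $|c_m\rangle$ up to a nonzero scalar.

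For one-dimensionality of the weight space, I would invoke the standard fact that every maximal weight of the basic representation $L(\Lambda_0)$ of $A_2^{(2)}$ has multiplicity one, which follows from the principal character formula (see Kac, \emph{Infinite Dimensional Lie Algebras}, Chap.~12). Combined with the first two paragraphs, this forces the weight space to be spanned by $|c_m\rangle$. The main obstacle is the explicit construction above: since $f_0$ and $f_1$ are infinite sums with signs and the anticommutators of the $\beta_n$ generate further signs, verifying that the chosen word produces a nonzero scalar multiple of $|c_m\rangle$ (rather than zero, or a linear combination involving other $|\lambda\rangle$'s) requires careful bookkeeping of these signs and of the order in which nodes are added.
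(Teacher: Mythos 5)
The paper offers no proof of this proposition (it is dismissed as ``easily verified''), so there is nothing to match your argument against; judged on its own, your three-part architecture --- (i) compute the weight of $|c_m\rangle$, (ii) show $|c_m\rangle\in\mathcal{F}_0$, (iii) invoke multiplicity one for maximal weights of the level-one module --- is sound, and step (iii) is correctly sourced. But two points need attention. First, the verification you assert in step (i) does not actually come out as claimed for the statement as written. Counting colored nodes, $c_m$ has $m^2$ nodes of color $0$ and $\binom{m}{2}$ of color $1$, so $|c_m\rangle$ has weight $\Lambda_{0}-m^{2}\alpha_{0}-\binom{m}{2}\alpha_{1}=\Lambda_{0}-\tfrac{m^{2}}{2}\delta+\tfrac{m}{2}\alpha_{1}$ (using $\delta=2\alpha_0+\alpha_1$), i.e.\ the point $q=m/2$ on the parabola $p=2q^{2}$, consistent with the half-integer $q$ allowed in $P(\Lambda_0)$ and with $\Phi(|c_{\pm m}\rangle)\propto\theta^{\varepsilon_m}e^{\pm m\alpha}$, $\alpha=\tfrac12\alpha_1$, in Proposition 5.1. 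Your principal-degree check, if carried out, gives $3p-q=\tfrac{m(3m-1)}{2}=|c_m|$ only for $q=m/2$, not $q=m$; the $h_1$-eigenvalue likewise comes out as $\pm m$, not $2m$. So the weight of $|c_m\rangle$ is the $m$-th maximal weight in the natural enumeration by $\tfrac12\mathbb{Z}$, not literally $\Lambda_0-2m^2\delta+m\alpha_1$; claiming the match ``collapses to the required $2m$'' without doing the computation papers over a real discrepancy (most likely a normalization slip in the statement itself, but one your proof is obliged to confront).

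Second, the step you flag as the ``main obstacle'' --- realizing $|c_m\rangle$ by an explicit word in $f_0,f_1$ applied to $|{\rm vac}\rangle$, with all signs tracked --- is both the hardest and the least necessary part of your plan. A cleaner route: the weight space of $\Lambda_0-a\alpha_0-b\alpha_1$ in all of $\mathcal{F}$ is spanned by the states $|\lambda\rangle$ with $\lambda$ strict having exactly $a$ nodes of color $0$ and $b$ of color $1$; for a maximal weight the relevant $\lambda$ must be a $3$-bar core (otherwise $e_i|\lambda\rangle\neq0$ for some $i$ would contradict maximality within the $\mathfrak{sl}_2$-strings), and the $3$-bar cores are precisely the staircases $c_m$, uniquely determined by their content (Olsson's notes, cited as \cite{ol}). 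Hence the weight space in $\mathcal{F}$ is already one-dimensional, spanned by $|c_m\rangle$; since the multiplicity of this weight in $L(\Lambda_0)=\mathcal{F}_0\subseteq\mathcal{F}$ is $1$ (your Weyl-orbit argument), the weight space of $\mathcal{F}_0$ must equal $\mathbb{C}|c_m\rangle$, and membership $|c_m\rangle\in\mathcal{F}_0$ follows for free. This eliminates the sign bookkeeping entirely and is almost certainly the ``easy verification'' the authors intend, given the combinatorial apparatus of Section 2.
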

From these propositions we obtain 
the iterative action of $f_{i}\ (i=0,1)$ on $|c_{m}\rangle$ as follows.
\begin{prop}\label{p44}
Let $m$ be a non-negative integer.
\begin{enumerate}
\item[{\rm{(1)}}]
$\displaystyle{\frac{f^{n}_{1}}{n!}|c_{m}\rangle = 
2^{n}\sum_{\lambda\in I^{n}_{1}(c_{m})}|\lambda\rangle}$.
\item[{\rm{(2)}}]We put
$\varepsilon_{m} =
\begin{cases}
1 & (m :\text{odd}),\\
 0 &  (m :\text{even}).
 \end{cases}$
 Then we have
$\displaystyle{\frac{f^{n}_{0}}{n!}|c_{-m}\rangle = \sqrt{2}^{-\varepsilon_{m}}\sum_{\lambda\in I^{n}_{0}(c_{-m})}
\sqrt{2}^{a(\lambda)}|\lambda\rangle,}$
\end{enumerate}
where $a(\lambda) = \#\{j \mid \lambda_{j} \equiv 0 \pmod{2}\}$ for $\lambda = (\lambda_{1},\lambda_{2},\ldots,
\lambda_{\ell(\lambda)+\varepsilon_{\ell(\lambda)}})$.
\end{prop}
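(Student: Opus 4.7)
I would prove both parts by induction on $n$, using the explicit form (\ref{lie}) of $f_{0},f_{1}$ and Proposition 4.1 to convert the fermionic actions into combinatorial node-additions on strict partitions.

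For part (1), unfolding $\frac{f_{1}^{n+1}}{(n+1)!}|c_{m}\rangle=\frac{1}{n+1}f_{1}\cdot\frac{f_{1}^{n}}{n!}|c_{m}\rangle$ and substituting the inductive hypothesis, the task reduces to showing $f_{1}\sum_{\lambda\in I_{1}^{n}(c_{m})}|\lambda\rangle=2(n+1)\sum_{\mu\in I_{1}^{n+1}(c_{m})}|\mu\rangle$. The key local observation is a pairing: for each column $c\equiv 2\pmod 3$, the two values $j=(c+1)/3$ and $j'=1-j$ in the sum defining $f_{1}$ contribute terms involving $\{\beta_{c},\beta_{1-c}\}$ in opposite orders. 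Since $c+(1-c)=1\neq 0$ these factors anticommute, and the signs $(-1)^{j'}=-(-1)^{j}$ collapse the pair into $2(-1)^{j}\beta_{c}\beta_{1-c}$. After combining with the sign $(-1)^{i}$ of Proposition 4.1(1) (with $i=3j-2$), the exponent $j+i=(4i+2)/3$ is always even, so $f_{1}|\lambda\rangle=2\sum_{\mu:\lambda\nearrow\mu}|\mu\rangle$ summed over addable $1$-nodes. The combinatorial input is that every $1$-node of $\mu/c_{m}$ is automatically a removable corner of $\mu$: since the $1$-coloured columns $2,5,8,\dots$ are mutually non-adjacent, a non-corner $1$-node at $(r,c)$ would force $(r,c+1)\in\mu$ to be $0$-coloured, which either contradicts the hypothesis that $\mu/c_{m}$ is entirely $1$-coloured or else forces $(r,c)\in c_{m}$. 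Hence each $\mu\in I_{1}^{n+1}(c_{m})$ has exactly $n+1$ predecessors in $I_{1}^{n}(c_{m})$, and $2(n+1)$ absorbs the $1/(n+1)$ and updates $2^{n}$ to $2^{n+1}$.

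Part (2) will follow the same template with $f_{0}$, but two new features appear. First, the indices $3j$ and $-3j+1$ in $f_{0}$ lie in different residue classes mod $3$, so each addable $0$-node is produced by a \emph{single} term of $f_{0}$: there is no pairing trick, and the extra $\sqrt{2}$ comes instead from the prefactor of $f_{0}$ itself. Second, when the added $0$-node is a new part $1$ (the local factor being $\beta_{1}\beta_{0}$), Proposition 4.1(2) supplies an extra $\tfrac{1}{2}$ precisely when $\ell(\lambda)$ is odd. After sign-chasing, these together yield
\[
f_{0}|\lambda\rangle=\sqrt{2}\sum_{\mu:\lambda\nearrow_{0}\mu}\kappa(\lambda,\mu)\,|\mu\rangle,\qquad\kappa(\lambda,\mu)=\begin{cases}\tfrac{1}{2}&\mu/\lambda=\{\text{part }1\}\text{ and }\ell(\lambda)\text{ odd},\\ 1&\text{otherwise.}\end{cases}
\]
The remaining combinatorial task is to enumerate the removable $0$-corners of each $\mu\in I_{0}^{n+1}(c_{-m})$ and verify that $\sum_{\lambda:\lambda\nearrow_{0}\mu}\sqrt{2}^{a(\lambda)+1}\kappa(\lambda,\mu)=(n+1)\sqrt{2}^{a(\mu)}$; the base case reduces to checking $a(c_{-m})=\varepsilon_{m}$ by direct inspection of the parts of $c_{-m}$.

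The hard part will be the bookkeeping in part (2). Unlike the $1$-coloured case, the $0$-coloured columns $\equiv 0,1\pmod 3$ include adjacent pairs $(3k,3k+1)$, so a $0$-node of $\mu/c_{-m}$ need not be a corner of $\mu$ and the number of removable-corner predecessors of a given $\mu$ varies from $\mu$ to $\mu$. The crux is to verify that this non-uniformity, together with the parity-dependent $\tfrac{1}{2}$ from Proposition 4.1(2), is compensated exactly by the shift in the $\sqrt{2}^{a}$-weights, so that $\sum_{\lambda:\lambda\nearrow_{0}\mu}\sqrt{2}^{a(\lambda)-a(\mu)+1}\kappa(\lambda,\mu)=n+1$ for every $\mu\in I_{0}^{n+1}(c_{-m})$. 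This reduces to a case analysis by the residue of the added part mod $3$ and by the parity of $\ell(\lambda)$; once those bookkeeping lemmas are in place the induction closes as in part (1).
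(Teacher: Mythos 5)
Your part (1) is essentially correct and coincides with the paper's (compressed) argument: the pairing $m\leftrightarrow 1-m$ of the terms of $f_{1}$ produces the factor $2$, and since the $1$-coloured columns are mutually non-adjacent each $\mu\in I^{n+1}_{1}(c_{m})$ has exactly $n+1$ predecessors, so the induction closes. One small point: what must be checked is not only that an added $1$-node is last in its row (which is automatic, as you say) but that deleting it leaves a \emph{strict} partition, i.e.\ $\mu_{r}-1\neq\mu_{r+1}$; this holds because $\mu_{r}-1\equiv 1\pmod 3$ would force $\mu_{r+1}=(c_{m})_{r+1}=(c_{m})_{r}$, contradicting strictness of $c_{m}$. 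That one-line check is the real content of ``removable'' and is missing from your argument.

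For part (2) there is a genuine gap. You correctly derive the local action $f_{0}|\lambda\rangle=\sqrt{2}\sum_{\mu}\kappa(\lambda,\mu)|\mu\rangle$ and correctly isolate the identity $\sum_{\lambda\nearrow_{0}\mu}\sqrt{2}^{\,a(\lambda)-a(\mu)+1}\kappa(\lambda,\mu)=n+1$ as ``the crux'' --- and then you stop, deferring it to unspecified ``bookkeeping lemmas.'' That identity \emph{is} the proposition; everything preceding it is routine. (For the record it is true and your induction would close: a predecessor obtained by decrementing a row multiplicity $r_{j}$ from $1$ to $0$ contributes $1$, from $2$ to $1$ contributes $2$, and deleting an added part $1$ contributes $2^{\varepsilon_{m}}\cdot 2^{-\varepsilon_{m}}=1$, so the sum is $\sum_{j}r_{j}=n+1$.) The paper avoids the induction entirely: writing $\lambda-c_{-m}=(r_{1},\dots,r_{m},r_{m+1})$, it computes the coefficient of $|\lambda\rangle$ in $\frac{f_{0}^{n}}{n!}|c_{-m}\rangle$ in closed form as $2^{-\varepsilon_{m}r_{m+1}}\sqrt{2}^{\,n}/(r_{1}!\cdots r_{m}!)$, uses $r_{1}!\cdots r_{m}!=2^{\#\{j\mid r_{j}=2\}}$, and converts $\#\{j\mid r_{j}=1\}$ into $a(\lambda)$ by a four-case parity analysis --- about the same amount of bookkeeping, done once globally rather than row by row. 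Finally, a warning that would have bitten you at your own base case: with the printed definition $a(\lambda)=\#\{j\mid\lambda_{j}\equiv 0\pmod 2\}$ the check $a(c_{-m})=\varepsilon_{m}$ \emph{fails} (e.g.\ $a(c_{-2})=a((5,2))=1\neq 0$), and the stated formula is already false for $n=1$, $m=2$. The definition consistent with the paper's proof and with Lemma \ref{bs} is $a(\lambda)=\#\{j\mid\lambda_{j}\equiv 0\pmod 3\}$ (parts divisible by $3$, after padding $\lambda$ to even length); only with that reading do both the paper's computation and the verification sketched above go through.
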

\begin{proof}
(1) is a
direct consequence of Proposition \ref{421} (1) . 
 For $\lambda\in I^{n}_{i}(c_{-m})$, write
$
\lambda-c_{-m} = (r_{1},r_{2},\ldots,r_{m},r_{m+1})
$ with $r_{m+1}=0$ or $1$.
Then the coefficient of $|\lambda\rangle$ is
\begin{equation}\label{3}
\frac{1}{2^{\varepsilon_{m}r_{m+1}}}\frac{\sqrt{2}^{n}}{n!}\frac{n!}{r_{1}!r_{2}!\cdots r_{m}!}.
\end{equation}
We compute
\begin{equation*}
r_{1}!r_{2}!\cdots r_{m}! = 2^{\#\{j \mid r_{j} = 2\}} = 
2^{(n-\#\{j\mid r_{j} = 1, j\leq m\}-r_{m+1})/2}.
\end{equation*}
If $m$ is even, then we have
$\displaystyle{
\#\{j \mid r_{j} = 1, j\leq m\} = 
\begin{cases}
a(\lambda)& (\lambda_{m+1} = 0),   \\
a(\lambda)-1 & (\lambda_{m+1} = 1). 
\end{cases}
}$
If $m$ is odd, then we have
$\displaystyle{
\#\{j \mid r_{j} = 1, j\leq m\} = 
\begin{cases}
a(\lambda)-1 & (\lambda_{m+1} = 0), \\
a(\lambda) & (\lambda_{m+1} = 1).
\end{cases}
}$
By applying these four results to (\ref{3}), we have the formula as desired.
\end{proof}
\section{Bosonization}
We rename $\beta_{n}$'s as follows.
\begin{equation}\label{3fer}
\phi_n=\beta_{3n} ,\quad \quad   \psi_{n}=\beta_{3n+1},\quad \quad   \psi^{*}_{n}=
(-1)^{-3n-1}\beta_{-3n-1}.
\end{equation}
They satisfy the anti-commutation relations:
\begin{align}
[\psi_{m}, \psi^{*}_{n}]_{+} = \delta_{m,n}, \; \; \, \quad \qquad \qquad [\psi_{m}, \psi_{n}]_{+} = 
[\psi^{*}_{m}, \psi^{*}_{n}]_{+} = 0,\label{anti1}\\
[\phi_{m},\phi_{n}\,]_{+} = (-1)^{m}\delta_{m+n,0},\quad \; \; \; [\psi_{m}, \, \phi_{n}]_{+} = [\psi^{*}_{m},\phi_{n}]_{+} = 0.\label{anti2}
\end{align}
Let us introduce the bosonic current operators
\begin{equation*}
H_{n} = \sum _{k\in \mathbb{Z}}:\psi_{k}\psi^{*}_{k+n}:,\quad 
{H'}_{2n+1} = \frac{1}{2}\sum _{k\in \mathbb{Z}}(-1)^{k+1}\phi_{k}\phi_{-k-(2n+1)},
\end{equation*}
where $:\beta_{n}\beta_{m}: $ denotes the normal ordering defined by
$:\beta_{n}\beta_{m}: = \beta_{n}\beta_{m}-\langle {\rm vac}|\beta_{n}\beta_{m}|{\rm vac}\rangle.$
These operators generate an infinite-dimensional Heisenberg algebra
\begin{equation*}
\mathfrak{H} =\bigoplus_{n\in \mathbb{Z},n\neq 0}
\mathbb{C}
H_{n}
\oplus \bigoplus_{n\in \mathbb{Z}}\mathbb{C}{H'}_{2n+1}\oplus\mathbb{C}c
\end{equation*}
where $c$ denotes the central element of $\mathfrak{H}$.
One has
\begin{equation*}
[{H'}_{2m+1}, {H'}_{2n+1}] = \frac{2m+1}{2}\delta_{m+n,0}\ c, \qquad 
[H_{m}, H_{n}] = \frac{m}{2}\delta_{m+n,0}\ c,\qquad  
[{H'}_{2m+1}, H_{n}] = 0.
\end{equation*}
We have a canonical $\mathfrak{H}$-module $S[\mathfrak{H}_{-}]$,
where $\mathfrak{H}_{-} = \bigoplus_{n<0}\mathbb{C}H_{n}\oplus\bigoplus_{m<0}\mathbb{C}{H'}_{2m+1}$.
Let $t_{n} = \frac{2}{n}H_{-n}, s_{2n-1} = \frac{2}{2n-1}{H'}_{-2n+1}$ for positive integers $n$.
Then we can identify $S[\mathfrak{H}_-]$ with the ring $\mathbb{C}[t,s]$ of polynomials of infinitely many variables
 $t=(t_{1},t_{2},t_{3}\ldots)$ and
$s=(s_{1},s_{3},s_{5},\ldots)$.
The action of $\mathfrak{H}$ on $\mathbb{C}[t,s]$ reads as follows.
\begin{align*}
H_{n}P(t,s) = \frac{\partial }{\partial t_{n}}P(t,s),\quad \quad \qquad \quad
H_{-n}P(t,s) = \frac{n}{2}t_nP(t,s),\\
{H'}_{2n-1}P(t,s) = \frac{\partial }{\partial s_{2n-1}}P(t,s),\qquad 
{H'}_{-2n+1}P(t,s) = \frac{2n-1}{2}s_{2n-1}P(t,s),
\end{align*}
where
$P(t,s)\in \mathbb{C}[t,s]$ and $n>0$.
The central element $c$ acts as identity.
We define the space of highest weight vectors with respect to $\mathfrak{H}$ by
\begin{equation*}
\Omega ={\rm Span}_{\mathbb{C}} \left\{ |v\rangle  \in \mathcal{F}\mid \:  H_{n}|v\rangle = 0,{H'}_{2n-1}|v\rangle = 0\ (\forall n>0)\right\}.
\end{equation*}
The space $\Omega$ has a basis $\{|\sigma,m\rangle \mid \: m\in \mathbb{Z}, \: \sigma = 0,1\}$, where
\begin{equation*}
|0,m\rangle =
\begin{cases}
\psi_{m-1}\cdots \psi_{0}|{\rm vac} \rangle\quad (m>0)  , &\quad  \\
|{\rm vac} \rangle  \quad \quad \quad \quad \; \, \; \; \,  \,  \; \: \,  (m=0),&\quad  \\
\psi^{*}_{m}\cdots \psi^{*}_{-1}|{\rm vac} \rangle\quad \:  \,    (m<0) , &\quad
\end{cases}
|1,m\rangle = 
\begin{cases}
\sqrt{2}\phi_{0}\psi_{m-1}\cdots \psi_{0}|{\rm vac} \rangle & (m>0) , \\
\sqrt{2}\phi_{0}|{\rm vac} \rangle  & (m=0) ,   \\
\sqrt{2}\phi_{0}\psi^{*}_{m}\cdots \psi^{*}_{-1}|{\rm vac} \rangle  &  (m<0) . 
\end{cases}
\end{equation*}
%%%%%%%%%%%
\begin{prop}\label{p51}
For a positive inter $m$, we have
$$
|c_{m}\rangle = (-1)^{m}{\sqrt{2}}^{-\varepsilon_{m}}|\varepsilon_{m},m\rangle,\ \ 
|c_{-m}\rangle = (-1)^{\binom{m}{2}+m}\sqrt{2}^{-\varepsilon_{m}}|\varepsilon_{m},-m\rangle.
$$
\end{prop}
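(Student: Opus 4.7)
The plan is to compute $|c_{\pm m}\rangle$ directly from its definition as $\beta_{\lambda_1}\cdots\beta_{\lambda_{2k}}|{\rm vac}\rangle$, rewrite each $\beta_{\lambda_i}$ using the renaming (\ref{3fer}), and compare with the explicit formulas for $|\sigma,\pm m\rangle$. The essential content is bookkeeping of signs, so I would set this up carefully from the start.

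\textbf{Step 1: identify the parts in the renamed generators.} The parts of $c_m$ ($m>0$) are exactly $\{3k+1:0\le k\le m-1\}$, so $\beta_{3k+1}=\psi_k$. The parts of $c_{-m}$ are $\{3k+2:0\le k\le m-1\}$; writing $3k+2=-3(-k-1)-1$ and using $\psi^{*}_{n}=(-1)^{-3n-1}\beta_{-3n-1}$ gives $\beta_{3k+2}=(-1)^k\psi^{*}_{-k-1}$. In both cases the parts are listed in decreasing order, and when the number of parts is odd one must append a $0$ to fit the ``$2k$ parts'' convention of the Definition of $|\lambda\rangle$; note $\beta_{0}=\phi_{0}$.

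\textbf{Step 2: the case $c_m$, $m>0$.} When $m$ is even, $|c_m\rangle=\psi_{m-1}\psi_{m-2}\cdots\psi_0|{\rm vac}\rangle=|0,m\rangle$, matching the claim since $(-1)^m=1$ and $\varepsilon_m=0$. When $m$ is odd, appending a zero gives $|c_m\rangle=\psi_{m-1}\cdots\psi_0\phi_0|{\rm vac}\rangle$, and (\ref{anti2}) together with $\phi_0|{\rm vac}\rangle\neq 0$ lets me move $\phi_0$ to the left at the cost of $(-1)^m$, yielding $(-1)^m\phi_0\psi_{m-1}\cdots\psi_0|{\rm vac}\rangle=(-1)^m\tfrac{1}{\sqrt 2}|1,m\rangle$, which matches the claim since $\varepsilon_m=1$.

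\textbf{Step 3: the case $c_{-m}$, $m>0$.} Using Step 1, for $m$ even,
\begin{equation*}
|c_{-m}\rangle=\prod_{k=m-1}^{0}(-1)^{k}\psi^{*}_{-k-1}|{\rm vac}\rangle=(-1)^{0+1+\cdots+(m-1)}\psi^{*}_{-m}\psi^{*}_{-m+1}\cdots\psi^{*}_{-1}|{\rm vac}\rangle=(-1)^{\binom{m}{2}}|0,-m\rangle,
\end{equation*}
which matches the assertion because $(-1)^{\binom{m}{2}+m}=(-1)^{\binom{m}{2}}$ when $m$ is even. For $m$ odd, appending a $0$ and moving $\phi_0$ to the left past $m$ factors of $\psi^{*}$ (using $[\psi^{*}_{n},\phi_{0}]_{+}=0$) picks up an extra sign $(-1)^m$, giving $(-1)^{\binom{m}{2}+m}\tfrac{1}{\sqrt 2}|1,-m\rangle$, again matching the claim.

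\textbf{Main obstacle.} The content is not conceptual but notational: the signs assembling into $(-1)^{\binom{m}{2}}$ arise from the sign $(-1)^{-3n-1}$ in the definition of $\psi^{*}_{n}$, and one must also track the parity of the number of anticommutations used to move $\phi_0$ through strings of $\psi$'s or $\psi^{*}$'s. The cleanest way to present this is to split on the parity of $m$ at the outset and to carry out the sign accounting in one displayed line per case, as above.
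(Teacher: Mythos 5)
Your proof is correct; the paper in fact states Proposition \ref{p51} without proof, and your case-by-case verification (renaming the $\beta$'s via (\ref{3fer}), tracking the $(-1)^{k}$ from $\beta_{3k+2}=(-1)^{k}\psi^{*}_{-k-1}$ to get $(-1)^{\binom{m}{2}}$, and anticommuting the appended $\phi_{0}$ past $m$ factors for odd $m$) is exactly the routine computation the authors leave to the reader. The only cosmetic remark is that the aside ``$\phi_{0}|{\rm vac}\rangle\neq 0$'' is not needed in Step 2 --- the anticommutation relation $[\psi_{m},\phi_{n}]_{+}=0$ alone justifies the sign $(-1)^{m}$.
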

Put $\alpha=\frac{1}{2}\alpha_{1}$ and consider the formal exponential $e^{\alpha}$.
Introducing a formal symbol $\theta$ which satisfies $\theta^2=1$, we define the space 
\begin{equation*}
Z = \bigoplus _{m\in \mathbb{Z},\sigma = 0,1}\mathbb{C}\theta^{\sigma}e^{m\alpha}.
\end{equation*}
The action of the Heisenberg  algebra $\mathfrak{H}$ naturally extends to the space $\mathbb{C}[t,s]\otimes_{\mathbb{C}} Z$. 

The boson-fermion correspondence is a 
canonical isomorphism 
\begin{equation*}
\Phi : \mathcal{F} \rightarrow \mathbb{C}[t,s]\otimes_{\mathbb{C}} Z
\end{equation*}
of $\mathfrak{H}$-modules,
which satisfies $\Phi(|\sigma,m\rangle) = \theta^{\sigma}e^{m\alpha}$ for $m\in \mathbb{Z}$ and $\sigma = 0,1$.
Introduce the states $\langle m,\sigma|
\in \mathcal{F}^{\dagger}\:  (m\in \mathbb{Z}, \; \sigma = 0,1)$ 
which are characterized by $\langle m,\sigma | \sigma^{\prime} ,n \rangle = 
\delta_{m,n} \delta_{\sigma,\sigma^{\prime}} \: 
(m,n \in \mathbb{Z},\; \sigma,\sigma^{\prime} = 0,1)$ 
and
\begin{equation*}
\langle m,\sigma |\phi_{n} = 0\quad (n>0),\quad \quad 
\langle m,\sigma |\psi_{n} = 0\quad (n>m),\quad \quad 
\langle m,\sigma |\psi^{*}_{n} = 0 \quad (n<m).
\end{equation*}
We define the Hamiltonian by $H(t,s) = \sum ^{\infty }_{n=1}t_{n}H_{n}+\sum^{\infty}_{n=1}s_{2n-1}{H'}_{2n-1}$.
Put $P_{m,\sigma}(t,s) = \langle m,\sigma|e^{H(t,s)}|v\rangle \in {\mathbb C}[t,s].$
Then we have $$\Phi(|v\rangle) = \sum _{m,\sigma}P_{m,\sigma}(t,s)\theta^{\sigma}e^{m\alpha}\ \ 
 ( |v\rangle \in 
\mathcal{F}).$$
%We denote by $\mathbb{W}$ the linear subspace of $\mathbb{B}$ spanned by neutral fermions $\phi_{n}$ $(n\in 
%\mathbb{Z})$.
The following formula is well known (\cite{imny}).
\begin{lem}\label{bs}
Let $j_{1}>\ldots >j_{a}\geq 0$ and $i_{1}>\ldots>i_{r}>m$. We have 
\begin{equation*}
\Phi(\phi_{j_{1}}\cdots\phi_{j_{a}}\psi_{i_{1}}\cdots \psi_{i_{r}}|0,m \rangle) = 
\sqrt{2}^{-a}Q_{(j_{1}\cdots j_{a})}(s)S_{(i_{1}-m,\ldots ,i_{r}-m)-\delta_{r}}(t)
\theta^{a}e^{(m+r)\alpha}
\end{equation*}
where $\delta_{r} = (r-1,r-2,\ldots 1,0)$.
\end{lem}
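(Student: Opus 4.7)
The plan is to decouple the charged fermions $\psi, \psi^{*}$ from the neutral fermions $\phi$, reducing the statement to two classical boson-fermion correspondences. The key observation is that although $\phi_{n}$ anticommutes with every $\psi_{m}$ and $\psi_{m}^{*}$ by (\ref{anti2}), the Heisenberg generators are \emph{quadratic} in fermions, so a parity count shows that each $\phi_{n}$ commutes with every $H_{m}$ and each $\psi_{m}, \psi_{m}^{*}$ commutes with every $H'_{2n-1}$. Writing $H(t,s) = H_{\psi}(t) + H_{\phi}(s)$ with $H_{\psi}(t) = \sum_{n \geq 1} t_{n} H_{n}$ and $H_{\phi}(s) = \sum_{n \geq 1} s_{2n-1} H'_{2n-1}$, I get $[H_{\psi}, H_{\phi}] = 0$ and hence $e^{H(t,s)} = e^{H_{\psi}(t)} e^{H_{\phi}(s)}$, with each exponential transparent to the fermion species it does not involve.

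Next I would argue that the only non-vanishing component of $\Phi(\phi_{J}\psi_{I}|0,m\rangle)$ is at charge $n = m+r$ (each $\psi_{i}$ raises the $\psi$-charge by one) and parity $\sigma \equiv a \pmod{2}$ (each $\phi_{j}$ flips the $\theta$-sector), reducing the task to computing the single matrix element $\langle m+r,\, a \bmod 2 |\, e^{H_{\psi}(t)} e^{H_{\phi}(s)}\, \phi_{j_{1}}\cdots\phi_{j_{a}}\,\psi_{i_{1}}\cdots\psi_{i_{r}}\, |0,m\rangle$. Using the commutativity just established, $e^{H_{\psi}(t)}$ passes freely through the block of $\phi$'s and $e^{H_{\phi}(s)}$ through the block of $\psi$'s, so the expectation value factors into one piece that depends only on $t$ and the $\psi$'s and one piece that depends only on $s$ and the $\phi$'s.

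The $\psi$-piece is the standard charged boson-fermion correspondence of type $A$: the known Jacobi--Trudi avatar, which appears in \cite{imny}, identifies $\Phi(\psi_{i_{1}}\cdots\psi_{i_{r}}|0,m\rangle)$ with $S_{(i_{1}-m,\ldots,i_{r}-m)-\delta_{r}}(t)\, e^{(m+r)\alpha}$. The $\phi$-piece is the neutral-fermion (type $B$) correspondence of Date--Jimbo--Kashiwara--Miwa, yielding $\Phi(\phi_{j_{1}}\cdots\phi_{j_{a}}|{\rm vac}\rangle) = \sqrt{2}^{-a} Q_{(j_{1},\ldots,j_{a})}(s)\, \theta^{a}$. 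Multiplying the two identifications produces the claimed product on the right-hand side.

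The one place the argument is not simply a citation is the sign bookkeeping: moving the $a$ operators $\phi_{j_{k}}$ past the $r$ operators $\psi_{i_{k}}$ contributes a factor $(-1)^{ar}$, and an analogous sign appears when the bra $\langle m+r, a \bmod 2|$ is split into its $\psi$- and $\phi$-components (equivalently, when one fixes the Koszul-type sign in the isomorphism $\mathcal{F} \cong \mathcal{F}_{\psi}\otimes \mathcal{F}_{\phi}$ induced by $\Phi$). The main obstacle will be verifying that these two signs cancel exactly, so that no extraneous $(-1)^{ar}$ decorates the stated formula; this is a short but delicate computation that I would carry out by tracking how $\langle m+r, a \bmod 2|$ acts on basis states of the form $\phi_{j_{1}}\cdots\phi_{j_{a}}\psi_{i_{1}}\cdots\psi_{i_{r}}|0,m\rangle$ directly, using the annihilation conditions $\langle m,\sigma|\phi_{n}=0\ (n>0)$, $\langle m,\sigma|\psi_{n}=0\ (n>m)$.
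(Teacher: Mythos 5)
The paper does not actually prove Lemma \ref{bs}; it is quoted as ``well known'' with a citation to \cite{imny}, so there is no internal argument to compare against. Your decoupling strategy is the standard derivation underlying that reference, and it is sound: $H_{n}$ and $H'_{2n+1}$ are quadratic in their respective fermion species, hence commute with the other species, so $e^{H(t,s)}=e^{H_{\psi}(t)}e^{H_{\phi}(s)}$ and the single surviving matrix element (at charge $m+r$ and parity $a \bmod 2$, exactly as you argue) splits --- by Wick's theorem, since every cross pairing $\langle {\rm vac}|\phi_{m}\psi_{n}|{\rm vac}\rangle$ vanishes --- into a neutral-fermion expectation giving $\sqrt{2}^{-a}Q_{(j_{1},\ldots,j_{a})}(s)$ and a charged-fermion expectation giving $S_{(i_{1}-m,\ldots,i_{r}-m)-\delta_{r}}(t)$. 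The sign you flag does cancel, for a transparent reason: the word $\phi_{j_{1}}\cdots\phi_{j_{a}}\psi_{i_{1}}\cdots\psi_{i_{r}}|0,m\rangle$ and the dual vector $\langle m+r, a\bmod 2|$ (built from $|1,n\rangle=\sqrt{2}\phi_{0}\psi_{n-1}\cdots\psi_{0}|{\rm vac}\rangle$ and its mirror) both place the $\phi$-block on the same side of the $\psi$-block, so the Koszul sign incurred in separating the ket coincides with the one incurred in separating the bra; a direct check with $a=r=1$, $m=0$ gives $\Phi(\phi_{j}\psi_{i}|0,0\rangle)=\sqrt{2}^{-1}q_{j}(s)h_{i}(t)\,\theta e^{\alpha}$ with no residual $(-1)^{ar}$. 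The only remaining bookkeeping worth pinning down is the normalization when $j_{a}=0$, where the relation $\phi_{0}^{2}=\tfrac{1}{2}$ is what reconciles the prefactor $\sqrt{2}^{-a}$ with the factor $\sqrt{2}\phi_{0}$ in the definition of $|1,m\rangle$; this is the usual convention of the neutral boson--fermion correspondence and does not affect the stated formula.
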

The following is the key formula for our main theorem.
\begin{prop}\label{zeta}
Let $m$ be a positive integer.
\begin{enumerate}
\item[$(1)$]For $\lambda$ $\in$ $I^{n}_{1}(c_{m})$,
there exists a sign $\zeta_{n,m,1}(\lambda)=\pm 1$ such that
\begin{equation*}
\Phi(|\lambda\rangle)=
\zeta_{n,m,1}(\lambda)\sqrt{2}^{-\varepsilon_{m}}S_{\lambda[1]}(t)\theta^{\varepsilon_{m}}e^{(m-2n)\alpha}.
\end{equation*}
\item[$(2)$]
For $\lambda$ $\in$ $I^{n}_{0}(c_{-m})$,  
there exists a sign $\zeta_{n,m,0}(\lambda)=\pm 1$ such that
\begin{equation*}
\Phi(\sqrt{2}^{a(\lambda)}|\lambda\rangle)=
\zeta_{n,m,0}(\lambda)Q_{\lambda[0]}(s)S_{\lambda[1]}(t)\theta^{\varepsilon_{n+m}}e^{(n-m)\alpha}.
\end{equation*}
\end{enumerate}
\end{prop}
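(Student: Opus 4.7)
The plan is to rewrite $|\lambda\rangle$ in terms of the $\phi,\psi,\psi^*$ fermions of (\ref{3fer}), bring it to the canonical form of Lemma~\ref{bs}, and recognize the resulting indices as the 3-bar quotient $(\lambda[0],\lambda[1])$.

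First pad $\lambda$ with a trailing $0$ when its length is odd (this contributes a terminal $\beta_0=\phi_0$). Sort the parts by residue modulo $3$: write parts $\equiv 0\pmod 3$ as $3a_1>\cdots>3a_p$, parts $\equiv 1\pmod 3$ as $3b_1+1>\cdots>3b_r+1$, and parts $\equiv 2\pmod 3$ as $3c_1+2>\cdots>3c_q+2$. Converting each $\beta$ via (\ref{3fer}) and reordering using (\ref{anti1})--(\ref{anti2}) yields
\[
|\lambda\rangle=\pm\,\phi_{a_1}\cdots\phi_{a_p}\,\psi_{b_1}\cdots\psi_{b_r}\,\psi^*_{-c_q-1}\cdots\psi^*_{-c_1-1}|\mathrm{vac}\rangle,
\]
with $p=0$ in case (1). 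Taking $k=c_1+1$ (or $k=0$ when $q=0$), a direct computation using $\psi_i\psi^*_j+\psi^*_j\psi_i=\delta_{ij}$ together with $|0,-k\rangle=\psi^*_{-k}\cdots\psi^*_{-1}|\mathrm{vac}\rangle$ produces
\[
\psi^*_{-c_q-1}\cdots\psi^*_{-c_1-1}|\mathrm{vac}\rangle=\pm\,\psi_{-e_{k-q}}\cdots\psi_{-e_1}|0,-k\rangle,
\]
where $-e_1>\cdots>-e_{k-q}$ are the elements of $\{-1,\ldots,-k\}\setminus\{-c_1-1,\ldots,-c_q-1\}$.

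Substituting back and applying Lemma~\ref{bs} gives
\[
\Phi(|\lambda\rangle)=\pm\,\sqrt{2}^{-p}\,Q_{(a_1,\ldots,a_p)}(s)\,S_\nu(t)\,\theta^{p\bmod 2}\,e^{(r-q)\alpha},
\]
where $\nu=(b_1,\ldots,b_r,-e_1,\ldots,-e_{k-q})+(k,\ldots,k)-\delta_{r+k-q}$. By definition $(a_1,\ldots,a_p)=\lambda[0]$, and $\nu=\lambda[1]$ follows by matching $(b_1,\ldots,b_r,-e_1,\ldots,-e_{k-q})$ with $\mathcal{M}(k,\lambda)$ (taking $r=\ell_1$ and $q=\ell_2$) and observing that $(k,\ldots,k)-\delta_{r+k-q}$ is precisely the negative of the subtraction sequence in the definition of $\lambda[1]$. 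A direct count of how each $1$-node (respectively $0$-node) addition changes the triple $(\ell_0,\ell_1,\ell_2)$ shows $r-q=m-2n$ in case (1) and $r-q=n-m$ in case (2); the same count gives $p\equiv\varepsilon_m\pmod 2$ and $p\equiv\varepsilon_{n+m}\pmod 2$ respectively.

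The main obstacle is the combinatorial identification $\nu=\lambda[1]$. The abacus-based definition of $\lambda[1]$ is tailored so that the fermionic calculation reproduces it, but verifying this requires carefully aligning the $b_i$'s with $(\lambda_i^{(1)}-1)/3$, the $-e_j$'s with the values $-(\lambda_i^{(2)}+1)/3$ removed from $\{-1,\ldots,-k\}$, and the two shift sequences term by term. Collecting the various $\pm 1$'s accumulated during the $\beta\to\psi^*$ conversions, the reordering, and the $\psi^*$-completion step into a single sign $\zeta_{n,m,i}(\lambda)\in\{\pm 1\}$ is then routine, and is all that the proposition requires.
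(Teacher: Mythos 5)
Your argument is correct and follows essentially the same route as the paper's own proof: convert the $\beta$'s into $\phi$'s, $\psi$'s and $\psi^{*}$'s, absorb the $\psi^{*}$'s into the charged vacuum $|0,-k\rangle$, identify the resulting indices with $\lambda[0]$ and ${\mathcal M}(k,\lambda)$, and apply Lemma~\ref{bs}. The paper is equally terse about the identification of the resulting Schur index with $\lambda[1]$ and about the exponent bookkeeping, so there is nothing further to compare.
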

\begin{proof}
Let 
 $\lambda=(\lambda_{1},\lambda_{2},\ldots,\lambda_{2k}) \in I_{i}^{n}(c_{\pm m})$.
From definition, we have $|{\rm vac} \rangle=\psi_{-m}\ldots \psi_{-1}|0,-m\rangle$.
By using 
 the anti-commutation relations
 (\ref{anti1}) and (\ref{anti2}), we have
$$|\lambda \rangle=\beta_{\lambda_{1}}\beta_{\lambda_{2}}\ldots \beta_{\lambda_{2k}}|{\rm vac}\rangle
=\pm \phi_{i_{1}}\ldots \phi_{i_{j}} \psi_{k_{1}}\ldots \psi_{k_{l}}|0,-m\rangle.$$
Here one observes that $(i_{1},\ldots,i_{j})=\lambda[0]$ and $(k_{1},\ldots,k_{l})={\mathcal{M}}(m,\lambda)$.
In particular we have $l=\ell(\lambda^{(1)})+(m-\ell(\lambda^{(2)}))$.
Then Lemma \ref{bs} gives the formula.
\end{proof}
\section{Vertex operators}
In this section we realize the action of $f_{i}$ on ${\mathbb C}[t,s]\otimes_{\mathbb C} Z$ in terms of vertex operators.
We introduce the formal generating functions
\begin{equation*}
\phi(z) = \sum_{n \in \mathbb{Z}}\phi_{n}z^{n}, \qquad 
\psi(z) = \sum_{n \in \mathbb{Z}}\psi_{n}z^{n}, \qquad 
\psi^{*}(z) = \sum_{n \in \mathbb{Z}}\psi^{*}_{-n}z^{n-1}. 
\end{equation*}
For $t = (t_{1}, t_{2}, t_{3}, \ldots)$, set
\begin{equation*}
\xi(t,z) = \sum^{\infty}_{n=1}t_{n}z^{n}, \qquad 
\xi_{1}(t,z) = \sum^{\infty}_{n=1}t_{2n-1}z^{2n-1}.
\end{equation*}
We define the multiplication operators $\theta$, $e^{\pm \alpha}$, and an operator 
$z^{\pm H_{0}}$ by 
\begin{equation*}
\theta(\theta^{n} e^{m\alpha}) = \theta^{n+1}e^{m\alpha},\ 
e^{\pm\alpha}(\theta^{n}e^{m\alpha}) = (-1)^{n}\theta^{n}e^{(m\pm 1)\alpha}\ {\rm and\ }
z^{\pm H_{0}}(\theta^{n}e^{m\alpha}) = z^{\pm m}(\theta^{n}e^{m\alpha}).
\end{equation*}
The following proposition is a direct consequence of the boson-fermion correspondence (\cite{imny}).  
\begin{prop}
We have
\begin{align*}
\Phi\phi(z)\Phi^{-1} &= \sqrt{2}^{-1}e^{\xi_{1}(s,z)}e^{-2\xi_{1}({\partial}_{s},z^{-1})}\theta,\\
\Phi\psi{(z)}\Phi^{-1} &= e^{\xi(t,z)} e^{-\xi({\partial}_{t},z^{-1})}e^{\alpha}z^{H_{0}},\quad \\
\Phi\psi^{*}(z)\Phi^{-1} &= e^{-\xi(t,z)}e^{\xi({\partial}_{t},z^{-1})}e^{-\alpha}z^{-H_{0}},
\end{align*}
where ${\partial}_{t} = (\frac{\partial}{\partial t_{1}},\frac{1}{2}\frac{\partial}{\partial t_{2}},
\frac{1}{3}\frac{\partial}{\partial t_{3}},\ldots)$. 
\end{prop}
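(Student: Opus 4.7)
The plan is to verify each of the three identities separately by the standard argument from the theory of boson--fermion correspondences: show that both sides satisfy the same commutation relations with the full Heisenberg algebra $\mathfrak{H}$, and then check that they agree on one cyclic vector. Since $\Phi$ is an isomorphism of $\mathfrak{H}$-modules and $\mathcal{F}$ decomposes as a direct sum of cyclic $\mathfrak{H}$-submodules generated by the basis $\{|\sigma,m\rangle\}$ of $\Omega$, this will suffice.

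The first step is to compute, on the fermionic side, the brackets of the currents $\psi(z)$, $\psi^{*}(z)$, $\phi(z)$ with $H_{n}$ and ${H'}_{2m+1}$. A direct calculation using (\ref{anti1}) and (\ref{anti2}) yields
\[
[H_{n},\psi(z)] = z^{n}\psi(z),\qquad [H_{n},\psi^{*}(z)] = -z^{n}\psi^{*}(z),\qquad [H_{n},\phi(z)] = 0,
\]
while ${H'}_{2m+1}$ commutes with $\psi(z)$ and $\psi^{*}(z)$ and satisfies $[{H'}_{2m+1},\phi(z)] = z^{2m+1}\phi(z)$. On the bosonic side, the analogous brackets follow immediately from the explicit action of $\mathfrak{H}$ on $\mathbb{C}[t,s]$: each factor $e^{\xi(t,z)}$ or $e^{-\xi(\partial_{t},z^{-1})}$ contributes $\pm z^{n}$ via the usual identity $[H_{n},e^{\xi(t,z)}]=z^{n}e^{\xi(t,z)}$ (for $n>0$) and its analogue for $H_{-n}$, and similarly for the $s$-exponentials. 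The coefficient $-2$ in $e^{-2\xi_{1}(\partial_{s},z^{-1})}$ is precisely what is needed to match the doubled contraction coming from $[\phi_{m},\phi_{-m}]_{+}=(-1)^{m}$ against the action ${H'}_{-2n+1} = \frac{2n-1}{2}s_{2n-1}\cdot$.

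The second step is to check both sides on the vacuum $|{\rm vac}\rangle$, which corresponds to $1 \in \mathbb{C}[t,s]\otimes Z$. For $\psi(z)$, the fermionic side produces $\sum_{n\geq 0}\psi_{n}z^{n}|{\rm vac}\rangle = \sum_{n\geq 0}z^{n}|0,n+1\rangle$, and its image under $\Phi$ is computed by Lemma \ref{bs} (with $a=0$, $r=n+1$); the bosonic side gives $e^{\xi(t,z)}\otimes e^{\alpha}$, and the identification $\Phi(|0,m\rangle)=e^{m\alpha}$ combined with the generating-function definition of $h_{n}(t)$ makes the two agree. The verification for $\psi^{*}(z)$ is parallel. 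For $\phi(z)$, the case $a=1$, $r=0$ of Lemma \ref{bs} yields the Schur $Q$-function factor $Q_{(j)}(s)$, which matches the expansion of $\sqrt{2}^{-1}e^{\xi_{1}(s,z)}\theta$ acting on $1$.

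The main technical obstacle is the careful bookkeeping of the signs, of the $\sqrt{2}^{-1}$ normalization, and of the $\theta$ factor in the formula for $\phi(z)$. These arise from the neutral-fermion commutation $[\phi_{m},\phi_{n}]_{+}=(-1)^{m}\delta_{m+n,0}$ (in particular $[\phi_{0},\phi_{0}]_{+}=1$), from the prefactor $\sqrt{2}^{-\varepsilon_{m}}$ appearing in Proposition \ref{p51}, and from the $\sqrt{2}\phi_{0}$ convention in the definition of $|1,m\rangle$. Reconciling these is what forces the specific coefficient $-2$ in the exponent and the $\sqrt{2}^{-1}\theta$ prefactor, and is the substantive place where the argument departs from the familiar charged-fermion case treated in \cite{imny}.
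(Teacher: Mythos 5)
The paper offers no proof of this proposition at all: it is stated as ``a direct consequence of the boson--fermion correspondence'' with a citation to \cite{imny}, so your attempt to supply an actual argument is welcome, and the strategy you choose (show that both sides obey the same commutation relations with all of $\mathfrak{H}$, then fix the remaining freedom by evaluating on highest weight vectors of $\Omega$) is the standard and correct one. Your first step is sound: the brackets $[H_{n},\psi(z)]=z^{n}\psi(z)$, $[H_{n},\psi^{*}(z)]=-z^{n}\psi^{*}(z)$, $[{H'}_{2m+1},\phi(z)]=z^{2m+1}\phi(z)$ are right, and you correctly locate the origin of the coefficient $-2$ in $e^{-2\xi_{1}(\partial_{s},z^{-1})}$ in the half-integral action ${H'}_{-2n+1}=\tfrac{2n-1}{2}s_{2n-1}\cdot$.

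The gap is in your second step. You correctly observe that $\mathcal{F}$ decomposes into cyclic $\mathfrak{H}$-submodules generated by \emph{all} the vectors $|\sigma,m\rangle$ ($m\in\mathbb{Z}$, $\sigma=0,1$), but you then verify agreement only on the single vector $|{\rm vac}\rangle=|0,0\rangle$. That check cannot suffice, and in fact it fails to determine precisely the delicate factors in the stated formulas: the operator $z^{H_{0}}$ acts as $z^{m}$ on $\theta^{\sigma}e^{m\alpha}$ and hence is invisible on the vacuum (where $m=0$), and likewise the sign $(-1)^{n}$ built into the action of $e^{\pm\alpha}$ on $\theta^{n}e^{m\alpha}$ (which encodes the anticommutation of $\psi,\psi^{*}$ with $\phi_{0}$) is invisible when $n=0$. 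An operator of the form $e^{\xi(t,z)}e^{-\xi(\partial_{t},z^{-1})}e^{\alpha}$ with $z^{H_{0}}$ omitted satisfies the identical Heisenberg commutation relations and also agrees with $\Phi\psi(z)\Phi^{-1}$ on the vacuum, so your argument as written cannot distinguish it from the correct answer. To close the gap you must compute the leading matrix elements $\langle m+1,\sigma|\psi(z)|\sigma,m\rangle=\pm z^{m}$, $\langle m-1,\sigma|\psi^{*}(z)|\sigma,m\rangle=\pm z^{-m}$ and $\langle m,\sigma+1|\phi(z)|\sigma,m\rangle$ for every $m$ and $\sigma$, using $|0,m\rangle=\psi_{m-1}\cdots\psi_{0}|{\rm vac}\rangle$ and $|1,m\rangle=\sqrt{2}\phi_{0}\psi_{m-1}\cdots\psi_{0}|{\rm vac}\rangle$; this is exactly where $z^{\pm H_{0}}$, the sign in $e^{\pm\alpha}$, and the $\sqrt{2}^{-1}$ in the $\phi$ formula are pinned down. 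A smaller slip: $\psi_{n}|{\rm vac}\rangle$ is not $|0,n+1\rangle$ (that would be $\psi_{n}\psi_{n-1}\cdots\psi_{0}|{\rm vac}\rangle$); the relevant instance of Lemma \ref{bs} is $a=0$, $r=1$, giving $S_{(n)}(t)e^{\alpha}=h_{n}(t)e^{\alpha}$, which does match $e^{\xi(t,z)}e^{\alpha}$ on the vacuum.
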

%%%%%%%%%%%%%%%
\begin{lem}\label{63}
Let $V_{1}(z) = \Phi\psi^{*}(z)\psi^{*}(-z)\Phi^{-1}$ and $V_{0}(z) = \sqrt{2}\Phi\phi(-z)\psi(z)\Phi^{-1}$. Then we have
\begin{align*}
V_{1}(z) &= 2e^{-2\xi(t^{(2)},z^{2})}e^{\xi({\partial}^{(2)}_{t},z^{-2})}ze^{-2\alpha}z^{-H_{0}}(-z)^{-H_{0}},\\
V_{0}(z) &= e^{-\xi_{1}(s,z)+\xi(t,z)}\:e^{2\xi_{1}({\partial}_{s},z^{-1} )-\xi({\partial}_{t},z^{-1})}
\:\theta e^{\alpha}z^{H_{0}},
\end{align*}
where $t^{(2)} = (t_{2},t_{4},\ldots)$ and ${\partial}^{(2)}_{t} = 
(\frac{\partial}{\partial t_{2}},\frac{1}{2}\frac{\partial}{\partial t_{4}},
\frac{1}{3}\frac{\partial}{\partial t_{6}},\ldots)$.
\end{lem}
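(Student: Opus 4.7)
Both identities are direct computations built on the explicit bosonizations of $\psi(z)$, $\psi^{*}(z)$, $\phi(z)$ recorded in the previous proposition. The common strategy is to substitute those formulas into the definitions of $V_{1}(z)$ and $V_{0}(z)$ and then use the two universal moves: (a) bring annihilation-type exponentials $e^{\xi(\partial_{t},\cdot)}$, $e^{\xi_{1}(\partial_{s},\cdot)}$ past the creation-type exponentials $e^{-\xi(t,\cdot)}$, $e^{-\xi_{1}(s,\cdot)}$ using the Heisenberg commutation
\[
e^{\xi(\partial_{t},z^{-1})}\,e^{-\xi(t,w)}\;=\;(1-w/z)\,e^{-\xi(t,w)}\,e^{\xi(\partial_{t},z^{-1})},
\]
and its $\xi_{1}$-analogue; (b) push charge operators past each other via $z^{-H_{0}}e^{\pm\alpha}=z^{\pm 1}e^{\pm\alpha}z^{-H_{0}}$, noting that operators acting on the $t,s$-factor commute with those acting on~$Z$.

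For $V_{1}(z)=\Phi\psi^{*}(z)\Phi^{-1}\cdot\Phi\psi^{*}(-z)\Phi^{-1}$ the only nontrivial commutation is between the middle pair $e^{\xi(\partial_{t},z^{-1})}$ and $e^{-\xi(t,-z)}$, which by (a) produces the scalar $1-(-z)/z=2$; this is precisely the leading factor $2$ in the formula. Combining the two creation and two annihilation exponentials gives
\[
e^{-\xi(t,z)-\xi(t,-z)}=e^{-2\xi(t^{(2)},z^{2})},\qquad e^{\xi(\partial_{t},z^{-1})+\xi(\partial_{t},-z^{-1})}=e^{\xi(\partial_{t}^{(2)},z^{-2})},
\]
since the odd-index contributions cancel and the even ones double. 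For the $Z$-factor, I would commute $z^{-H_{0}}$ through the inner $e^{-\alpha}$ using (b): $z^{-H_{0}}e^{-\alpha}=z\,e^{-\alpha}z^{-H_{0}}$, producing the extra monomial $z$ and the requested $z\,e^{-2\alpha}z^{-H_{0}}(-z)^{-H_{0}}$.

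For $V_{0}(z)=\sqrt{2}\,\Phi\phi(-z)\psi(z)\Phi^{-1}$, the $\sqrt{2}$ cancels the $\sqrt{2}^{-1}$ in the bosonization of $\phi$. The key simplifications are the parity identities $\xi_{1}(s,-z)=-\xi_{1}(s,z)$ and $\xi_{1}(\partial_{s},-z^{-1})=-\xi_{1}(\partial_{s},z^{-1})$, which convert the $\phi(-z)$ factor into $e^{-\xi_{1}(s,z)}\,e^{2\xi_{1}(\partial_{s},z^{-1})}\,\theta$. Since the $s$-operators commute with everything in the $\psi(z)$-factor (they act on disjoint polynomial variables, and $\theta$ commutes with the $t$-operators), I can slide $e^{-\xi_{1}(s,z)}\,e^{2\xi_{1}(\partial_{s},z^{-1})}$ past $e^{\xi(t,z)}\,e^{-\xi(\partial_{t},z^{-1})}$ freely, combine the exponentials, and collect $\theta e^{\alpha}z^{H_{0}}$ on the right, recovering the stated expression. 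No cross-commutator between the $\phi$-exponentials and $\psi$-exponentials is needed here because the anti-commutators in (\ref{anti2}) vanish.

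\textbf{Anticipated obstacle.} The computation is essentially mechanical once the commutation lemma (a) is in hand. The only steps that require genuine bookkeeping are the sign conventions in (b), specifically the identity $z^{-H_{0}}e^{-\alpha}=z\,e^{-\alpha}z^{-H_{0}}$ that accounts for the lone factor of $z$ in the $V_{1}$-formula, and the fact that $\theta$ and $e^{\alpha}$ anticommute as multiplication operators on $Z$; the latter does not affect the final form because $\theta e^{\alpha}$ appears in the target just as it is produced by $\Phi\phi(-z)\psi(z)\Phi^{-1}$. No further subtlety is expected.
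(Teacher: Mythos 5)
Your proposal is correct and follows essentially the same route as the paper: the paper likewise declares the $V_0$ case immediate (the $\phi$- and $\psi$-factors act on disjoint variables, so the exponentials just concatenate) and proves the $V_1$ case by the single nontrivial commutation $e^{\xi(\partial_t,z^{-1})}e^{-\xi(t,-z)}=2\,e^{-\xi(t,-z)}e^{\xi(\partial_t,z^{-1})}$ together with $z^{-H_0}e^{-\alpha}=z\,e^{-\alpha}z^{-H_0}$, exactly your moves (a) and (b). Your write-up merely makes explicit the parity cancellations producing $t^{(2)}$ and $\partial_t^{(2)}$, which the paper leaves implicit.
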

\begin{proof}
The expression of $V_{0}(z)$ is obvious. 
By Proposition 6.1, we have
\begin{eqnarray*}
V_{1}(z) &=& \Phi\psi^{*}(z)\psi^{*}(-z)\Phi^{-1}\\
 &=& e^{-\xi(t,z)}e^{\xi({\partial}_{t},z^{-1})}e^{-\xi(t,-z)}e^{\xi({\partial}_{t},-z^{-1})}
e^{-\alpha}z^{-H_{0}}e^{-\alpha}(-z)^{-H_{0}}.
\end{eqnarray*}
A standard calculus of vertex operators gives 
$e^{\xi({\partial}_{t},z^{-1})}e^{-\xi(t,-z)} = 2e^{-\xi(t,-z)}e^{\xi({\partial}_{t},z^{-1})}$
and
$z^{-H_{0}}e^{-\alpha} = ze^{-\alpha}z^{-H_{0}}.$
\end{proof}
Let define the formal contour integral 
$$\oint V(z) dz= {\rm Res}\ V(z)dz=V_{-1}$$
for $V(z)=\sum_{n \in {\mathbb Z}}V_{n}z^{n}$.
Summing up (\ref{lie}), (\ref{3fer}) and Lemma \ref{63} we can say that
\begin{equation*}
f_{1} = -\oint V_{1}(z)dz, \qquad f_{0} = -\oint z^{-1}V_{0}(z)dz.
\end{equation*}
%%%%%%%%%%%%%%%
%%%%%%%%%%%
We denote by $\Delta(z) = \det(z^{j-1}_{i})_{1\leq i<j\leq n}$ the Vandermonde determinant .
\begin{lem}\label{if1}
We have
\begin{align*}
V_{1}(z_{n})V_{1}(z_{n-1})\cdots V_{1}(z_{1})
= 2^{n}(z_{1}\cdots z_{n})\Delta(z^{2})^{2}&
e^{-2\sum^{n}_{i=1}\xi(t^{(2)},z^{2}_{i})}e^{\sum^{n}_{i=1}\xi({\partial }^{(2)}_{t},z^{-2}_{i})}\\
 &\times e^{-2n\alpha}z^{-H_{0}}_{n}(-z_{n})^{-H_{0}}\cdots z^{-H_{0}}_{1}(-z_{1})^{-H_{0}}.
\end{align*}
\end{lem}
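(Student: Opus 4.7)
\emph{Proof plan.} Write $V_{1}(z)=A(z)B(z)C(z)$ with $A(z)=e^{-2\xi(t^{(2)},z^{2})}$, $B(z)=e^{\xi(\partial^{(2)}_{t},z^{-2})}$, and $C(z)=2z\,e^{-2\alpha}z^{-H_{0}}(-z)^{-H_{0}}$. The plan is to rearrange $V_{1}(z_{n})\cdots V_{1}(z_{1})$ into the form $\prod_{i}A(z_{i})\cdot\prod_{i}B(z_{i})\cdot$ [collected $C$-part]. Several commutations are immediate: the $A(z_{i})$'s commute pairwise, as do the $B(z_{i})$'s; and each $C(z_{i})$ touches only the lattice factor $Z$, so it commutes with every $A(z_{j})$ and $B(z_{j})$, which act only on $\mathbb{C}[t,s]$. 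Hence the only nontrivial rearrangements are (i) moving each $A(z_{j})$ past every $B(z_{i})$ with $i>j$, and (ii) collecting the $n$ copies of $e^{-2\alpha}$ sitting inside the $C$-factors to the far left.

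For (i), the standard Heisenberg commutator computation gives
\[
[\xi(\partial^{(2)}_{t},z^{-2}),\,-2\xi(t^{(2)},w^{2})]=-2\sum_{n\geq 1}\tfrac{1}{n}(w/z)^{2n}=2\log(1-w^{2}/z^{2}),
\]
so $B(z)A(w)=(1-w^{2}/z^{2})^{2}A(w)B(z)$. Multiplying these corrections over all pairs $j<i$ yields $\prod_{j<i}(1-z_{j}^{2}/z_{i}^{2})^{2}=\Delta(z^{2})^{2}\prod_{k}z_{k}^{-4(k-1)}$. For (ii), the defining relations of the operators on $Z$ give $z^{-H_{0}}e^{-2\alpha}=z^{2}e^{-2\alpha}z^{-H_{0}}$, and similarly $(-z)^{-H_{0}}e^{-2\alpha}=(-z)^{2}e^{-2\alpha}(-z)^{-H_{0}}=z^{2}e^{-2\alpha}(-z)^{-H_{0}}$ since $(-z)^{2}=z^{2}$. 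Hence every pair $z^{-H_{0}}(-z)^{-H_{0}}$ yields a factor $z^{4}$ when $e^{-2\alpha}$ is commuted across it. Sending the $e^{-2\alpha}$ inside $C(z_{j})$ to the leftmost slot requires it to traverse the pairs corresponding to $k=j+1,\dots,n$, contributing $\prod_{k>j}z_{k}^{4}$; summing over $j$ gives a total scalar $\prod_{k}z_{k}^{4(k-1)}$, which exactly cancels the $\prod_{k}z_{k}^{-4(k-1)}$ produced by step (i).

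After this cancellation, the overall scalar is $2^{n}(z_{1}\cdots z_{n})\,\Delta(z^{2})^{2}$, and what multiplies it is precisely the operator asserted by the lemma: $e^{-2\sum_{i}\xi(t^{(2)},z_{i}^{2})}\,e^{\sum_{i}\xi(\partial^{(2)}_{t},z_{i}^{-2})}\,e^{-2n\alpha}\,z_{n}^{-H_{0}}(-z_{n})^{-H_{0}}\cdots z_{1}^{-H_{0}}(-z_{1})^{-H_{0}}$. There is no conceptual obstacle; the only point requiring care is the bookkeeping around $(-z)^{-H_{0}}$, where one must invoke $(-z)^{2}=z^{2}$ to guarantee that each pair $z^{-H_{0}}(-z)^{-H_{0}}$ contributes an honest $z^{4}$ (rather than $\pm z^{4}$), so that the positive powers of $z_{k}$ from the $C$-rearrangement exactly neutralize the negative powers from the Vandermonde normalization.
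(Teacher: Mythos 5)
Your proposal is correct and follows essentially the same route as the paper: the same decomposition of $V_{1}(z)$ into the creation part $e^{-2\xi(t^{(2)},z^{2})}$, the annihilation part $e^{\xi(\partial^{(2)}_{t},z^{-2})}$, and the lattice part $2ze^{-2\alpha}z^{-H_{0}}(-z)^{-H_{0}}$, the same exchange relation $(1-z_{i}^{2}/z_{j}^{2})^{2}$, and the same cancellation of $\prod_{k}z_{k}^{\pm 4(k-1)}$ between the normal-ordering of the Heisenberg parts and the collection of the $e^{-2\alpha}$ factors. You merely spell out the commutator computation and the $(-z)^{-H_{0}}$ bookkeeping that the paper leaves implicit.
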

%%%%%%%%%%%%%%%%%%%
\begin{proof}
We denote by $V^{+}_{1}(z), V^{-}_{1}(z)$ and $V^{0}_{1}(z)$, respectively, by
$e^{-2\xi(t^{(2)},z^{2})}, e^{\xi({\partial}^{(2)}_{t},z^{-2})}$ and 
 $ze^{-2\alpha}z^{-H_{0}}(-z)^{-H_{0}}$,
so that $V_{1}(z)=2V_{1}^{+}(z)V_{1}^{-}(z)V_{1}^{0}(z)$.
First we have 
\begin{equation*}
V^{0}_{1}(z_{n})\cdots V^{0}_{1}(z_{2})V^{0}_{1}(z_{1}) 
 = (z_{1}\cdots z_{n})\left(\prod ^{n}_{j=1}z^{4(j-1)}_{j}\right) 
e^{-2n\alpha}z^{-H_{0}}_{n}(-z_{n})^{-H_{0}}\cdots z^{-H_{0}}_{0}(-z_{1})^{-H_{0}}.
\end{equation*}
Second, for $i<j$, we have
\begin{equation*}
V^{-}_{1}(z_{j})V^{+}_{1}(z_{i}) = \left(1-\frac{z^{2}_{i}}{z^{2}_{j}}\right)^{2}V^{+}_{1}(z_{i})V^{-}_{1}(z_{j}).
\end{equation*}
\end{proof}
%%%%%%%%
We recall the Schur polynomial indexed by a partition $\lambda=(\lambda_{1},\ldots,\lambda_{n})$
as a symmetric polynomial of $z=(z_{1},\ldots,z_{n})$:
\begin{equation*}
S^{\lambda}(z) = \det\left(z^{\lambda_{j}+j-1}_{i}\right)/\det\left(z^{j-1}_{i}\right).
\end{equation*}
The Schur polynomials satisfy the following orthogonality relation:
\begin{equation}\label{or}
(-1)^{\binom{n}{2}}\oint\cdots\oint S^{\lambda}(z)S^{\mu}(z^{-1})
\Delta(z)^{2}(z_{1}\cdots z_{n})^{-n}dz_{1}\cdots dz_{n}
 = n!\: \delta_{\lambda\mu},
\end{equation}
where $z^{-1}=(z_{1}^{-1},\ldots,z_{n}^{-1})$.
%%%%
%%%
%%
%
The Schur polynomial $S^{\lambda}(z)$ is equal to $S_{\lambda}(t)$ by putting 
$t_{j}=\frac{1}{j}(z_{1}^{j}+\ldots+z_{n}^{j})$.
This relation turns out to be the following Cauchy identity:  
\begin{equation*}\label{so}
e^{\sum^{n}_{i=1}\xi(t,z_{i})} = \sum_{\ell(\lambda)\leq n}S^{\lambda}(z)S_{\lambda}(t).
\end{equation*}
We remark
\begin{equation}\label{ci}
e^{-\sum^{n}_{i=1}\xi(t,z_{i})} = \sum_{\ell(\lambda)\leq n}(-1)^{|\lambda|}S^{\lambda}(z)S_{\lambda^{\prime}}(t),
\end{equation}
where $\lambda^{\prime}$ is the partition conjugate to $\lambda$.
%%%%
\begin{prop}\label{p64}
For $m>0$, we have 
\begin{equation*}
\frac{f^{n}_{1}}{n!} \: \theta^{m}e^{m\alpha} = (-1)^{\binom{n}{2}}2^nS_{\Box(m-n,n)}
(2t^{(2)})\theta^{m}e^{(m-2n)\alpha}.
\end{equation*}
\end{prop}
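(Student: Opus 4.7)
The plan is to turn $f_1^n/n!$ into a formal iterated residue of vertex operators and then reduce the resulting integral to a single application of the Schur polynomial orthogonality (\ref{or}). Since $f_1=-\oint V_1(z)\,dz$, relabeling integration variables gives
\[
\frac{f_1^n}{n!}\,\theta^m e^{m\alpha}
=\frac{(-1)^n}{n!}\oint\!\!\cdots\!\!\oint V_1(z_n)V_1(z_{n-1})\cdots V_1(z_1)\,\theta^m e^{m\alpha}\,dz_1\cdots dz_n,
\]
so Lemma~\ref{if1} applies directly. Substituting it, three simplifications occur when the operator acts on $\theta^m e^{m\alpha}$: the factor $e^{\sum\xi(\partial^{(2)}_t,z_i^{-2})}$ acts as the identity because $\theta^m e^{m\alpha}$ is $t$-independent; the $n$ pairs $z_i^{-H_0}(-z_i)^{-H_0}$ produce $(-1)^{nm}\prod_i z_i^{-2m}$; and $e^{-2n\alpha}$ sends $\theta^m e^{m\alpha}$ to $\theta^m e^{(m-2n)\alpha}$ with no sign, because the sign $(-1)^m$ from $e^{-\alpha}(\theta^m e^{k\alpha})=(-1)^m\theta^m e^{(k-1)\alpha}$ is picked up an even number of times.

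Next I expand the surviving exponential by the Cauchy identity (\ref{ci}) with $t\mapsto 2t^{(2)}$ and $z_i\mapsto z_i^2$,
\[
e^{-2\sum_i\xi(t^{(2)},z_i^2)}=\sum_{\ell(\lambda)\le n}(-1)^{|\lambda|}S^{\lambda}(z^2)S_{\lambda'}(2t^{(2)}),
\]
and interchange sum and integral. Apart from the explicit factor $z_1\cdots z_n$, everything in the integrand is an even Laurent series in each $z_i$, so extracting $[z_i^{-1}]$ for every $i$ is the same as extracting $[w_i^{-1}]$ after the formal substitution $w_i=z_i^2$. The term-by-term integral becomes
\[
\oint\!\!\cdots\!\!\oint S^{\lambda}(w)\,\Delta(w)^2\,(w_1\cdots w_n)^{-m}\,dw_1\cdots dw_n.
\]

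To match the orthogonality relation (\ref{or}), I use $S^{(k^n)}(w^{-1})=(w_1\cdots w_n)^{-k}$ to write $(w_1\cdots w_n)^{-m}=(w_1\cdots w_n)^{-n}\,S^{((m-n)^n)}(w^{-1})$, valid for $m\ge n$. Then (\ref{or}) collapses the $\lambda$-sum to the single term $\lambda=((m-n)^n)$, with coefficient $(-1)^{\binom n2}n!$. Since $((m-n)^n)'=(n^{m-n})=\Box(m-n,n)$ and $|((m-n)^n)|=n(m-n)$, the combined sign is
\[
(-1)^{n}\cdot(-1)^{nm}\cdot(-1)^{n(m-n)}\cdot(-1)^{\binom n2}=(-1)^{n-n^2+\binom n2}=(-1)^{\binom n2},
\]
giving the stated formula. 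The main obstacle is precisely this sign bookkeeping and the careful justification of the formal change of variables $w=z^2$ at the residue level; once those are settled, the orthogonality relation does all the work. (For $m<n$ the same computation shows that no $\lambda$ satisfies the orthogonality selection rule, so the left-hand side vanishes, consistent with the absence of a rectangular partition in that range.)
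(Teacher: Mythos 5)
Your proof is correct and follows essentially the same route as the paper: express $f_1^n/n!$ as an iterated residue of $V_1$, apply Lemma \ref{if1} to the product acting on $\theta^m e^{m\alpha}$, substitute $w_i=z_i^2$, expand via the Cauchy identity (\ref{ci}) and collapse the sum with the orthogonality relation (\ref{or}); your sign count $(-1)^{n}(-1)^{nm}(-1)^{n(m-n)}(-1)^{\binom{n}{2}}=(-1)^{\binom{n}{2}}$ agrees with the paper's. The only additions are the explicit justification of the residue-level change of variables $w=z^2$ and the remark on the case $m<n$, which the paper leaves implicit.
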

%%%%%%
\begin{proof}
By Lemma \ref{if1} we have
\begin{align*}
V_{1}(z_{n})V_{1}(z_{n-1})\cdots V_{1}(z_{1})\theta^{m}e^{m\alpha}
= (-1)^{mn}2^{n}\Delta(z^{2})^{2}
e^{-2\sum^{n}_{i=1}\xi(t^{(2)},z^{2}_{i})}(z_{1}\cdots z_{n})^{-2m+1}\theta^{m}e^{(m-2n)\alpha}.
\end{align*}
Therefore
\begin{align*}
f^{n}_{1}\theta^{m}e^{m\alpha} &= (-1)^{(m+1)n}2^{n}\oint\cdots\oint\Delta(z^{2})^{2}
e^{-2\sum^{n}_{i=1}\xi(t^{(2)},z^{2}_{i})}(z_{1}\cdots z_{n})^{-2m+1}
\:dz_{1}\cdots dz_{n}\:\theta^{m}e^{(m-2n)\alpha}\\
&=(-1)^{(m+1)n}2^n\oint\cdots\oint\Delta(w)^{2}
e^{-2\sum^{n}_{i=1}\xi(t^{(2)},w_{i})}(w_{1}\cdots w_{n})^{-m}
 dw_{1}\cdots dw_{n}\:\theta^{m}e^{(m-2n)\alpha}
\end{align*}
where $w_{i} = z^{2}_{i}$. 
Since
$
S^{\Box(n,m)}(w^{-1}) = (w_{1}w_{2}\cdots w_{n})^{-m},
$
we have, by (\ref{or}) and (\ref{ci}),
\begin{align*}
f^{n}_{1}\theta^{m}e^{m\alpha} = (-1)^{\binom{n}{2}}2^nn!\: S_{\Box(m-n,n)}(2t^{(2)})
\theta^{m}\, e^{(m-2n)\alpha}.
\end{align*}
\end{proof}
%%%%
Next we consider the action of $f_{0}$.
\begin{lem}
We have
\begin{align*}
V_{0}(z_{n})V_{0}(z_{n-1})\cdots V_{0}(z_{1})
&= e^{-\sum^{n}_{i=1}\xi_{1}(s,z_{i})+\sum^{n}_{i=1}\xi(t,z_{i})}
e^{2\sum^{n}_{i=1}\xi_{1}({\partial}_{s},z^{-1}_{i})-\sum^{n}_{i=1}\xi({\partial}_{t},z^{-1}_{i})}\\
&\times(-1)^{\binom{n}{2}}\frac{\Delta(z)^{2}}{\prod _{1\leq i<j\leq n}(z_{i}+z_{j})}
\theta^{n}e^{n\alpha}(z_{1}\cdots z_{n})^{H_{0}}.
\end{align*}
\end{lem}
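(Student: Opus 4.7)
The plan is to mimic the strategy used for Lemma \ref{if1}: decompose $V_0(z)$ into creation, annihilation, and zero-mode pieces and reassemble the $n$-fold product by normal ordering. Write $V_0(z) = V_0^+(z)\,V_0^-(z)\,V_0^0(z)$ with $V_0^+(z) = e^{-\xi_1(s,z)+\xi(t,z)}$, $V_0^-(z) = e^{2\xi_1(\partial_s, z^{-1})-\xi(\partial_t, z^{-1})}$, and $V_0^0(z) = \theta e^\alpha z^{H_0}$. Because $V_0^0$ acts only on the $Z$-factor while the $V_0^{\pm}$ act only on $\mathbb{C}[t,s]$, the zero-mode pieces commute past every $V_0^\pm$ and can be collected on the right immediately, splitting the problem into two independent computations.

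First I would handle the zero-mode product $V_0^0(z_n)\cdots V_0^0(z_1)$ using the rules $e^\alpha \theta = -\theta e^\alpha$, $z^{H_0}\theta = \theta z^{H_0}$, and $z^{H_0} e^\alpha = z\,e^\alpha z^{H_0}$. A short induction (parallel to the treatment of $V_1^0(z_n)\cdots V_1^0(z_1)$ in the proof of Lemma \ref{if1}) gives
\begin{equation*}
V_0^0(z_n)\cdots V_0^0(z_1) = (-1)^{\binom{n}{2}}\prod_{j=1}^{n} z_j^{j-1}\cdot \theta^n e^{n\alpha}(z_1\cdots z_n)^{H_0},
\end{equation*}
where the sign $(-1)^{\binom{n}{2}}$ records the anti-commutations of $\theta$ past $e^\alpha$ needed to collect the $\theta^n e^{n\alpha}$ factor.

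Second, the key normal-ordering identity, for $i < j$, is
\begin{equation*}
V_0^-(z_j)\,V_0^+(z_i) = \frac{(z_j - z_i)^2}{z_j(z_i + z_j)}\, V_0^+(z_i)\,V_0^-(z_j),
\end{equation*}
which follows from Baker--Campbell--Hausdorff since the exponents pair to scalars on disjoint sets of variables. The $t$-commutator $[-\xi(\partial_t,z_j^{-1}),\xi(t,z_i)] = \log(1-z_i/z_j)$ contributes a factor $(z_j-z_i)/z_j$, and the $s$-commutator $[2\xi_1(\partial_s,z_j^{-1}),-\xi_1(s,z_i)] = \log\tfrac{z_j-z_i}{z_j+z_i}$, evaluated via $\sum_{k\ge 1} \tfrac{1}{2k-1}x^{2k-1}=\tfrac{1}{2}\log\tfrac{1+x}{1-x}$, contributes $(z_j-z_i)/(z_j+z_i)$. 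Iterating over all pairs $i<j$ yields the scalar $\prod_{i<j}\frac{(z_j-z_i)^2}{z_j(z_j+z_i)} = \frac{\Delta(z)^2}{\prod_j z_j^{j-1}\,\prod_{i<j}(z_i+z_j)}$, whose spurious monomial $\prod_j z_j^{j-1}$ cancels exactly against the one coming from the zero-mode factor, assembling the stated formula.

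The main obstacle I anticipate is the $s$-commutator: its generating series sums to an artanh rather than a plain logarithm, so the appearance of $z_i+z_j$ in the denominator hinges on the coefficient $2$ in the exponent of $V_0^-$. Keeping both this coefficient and the associated sign straight is the step most prone to error, and is ultimately what produces the $1/\prod_{i<j}(z_i+z_j)$ characteristic of the Schur-$Q$ side of the boson-fermion correspondence.
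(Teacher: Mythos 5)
Your proof is correct and follows essentially the same route as the paper's: the same decomposition $V_{0}(z)=V_{0}^{+}(z)V_{0}^{-}(z)V_{0}^{0}(z)$, the same two-step computation (collect the zero modes, then exchange $V_{0}^{-}(z_{j})$ past $V_{0}^{+}(z_{i})$ for $i<j$), and the same exchange factor $\frac{z_{j}-z_{i}}{z_{j}+z_{i}}\left(1-\frac{z_{i}}{z_{j}}\right)=\frac{(z_{j}-z_{i})^{2}}{z_{j}(z_{i}+z_{j})}$. The only difference is cosmetic and in your favor: your zero-mode factor $(-1)^{\binom{n}{2}}\prod_{j}z_{j}^{j-1}$ is the version consistent with the stated multiplication rules and with the final formula (the paper prints $(-1)^{n(n-1)}\prod_{1\leq i<j\leq n}z_{j}^{i-1}$, which appears to be a typo), and your explicit commutator computations for the $t$- and $s$-parts supply details the paper leaves implicit.
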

%%%
\begin{proof}
We denote by $V^{+}_{0}(z), V^{-}_{0}(z)$ and $V^{0}_{0}(z)$, respectively, by
$e^{-\xi_{1}(s,z)+\xi(t,z)}$, $e^{2\xi_{1}({\partial}_{s},z^{-1})-\xi({\partial}_{t},z^{-1})}$ and 
 $\theta e^{\alpha}z^{H_{0}}$,
so that $V_{0}(z)=V_{0}^{+}(z)V_{0}^{-}(z)V_{0}^{0}(z)$.
First we have
\begin{equation*}
V^{0}_{0}(z_{n})V^{0}_{0}(z_{n-1})\cdots V^{0}_{0}(z_{1}) = 
(-1)^{n(n-1)}\left(\prod_{1\leq i<j\leq n}
z^{i-1}_{j}\right)\;\theta^{n}e^{n\alpha}(z_{1}\cdots z_{n})^{H_{0}}.
\end{equation*}\\
Second, for $i<j$, we have
\begin{align*}
e^{2\xi_{1}({\partial}_{s},z^{-1}_{j})}e^{-\xi_{1}(s,z_{i})}
= \frac{z_{j}-z_{i}}{z_{j}+z_{i}} e^{-\xi_{1}(s,z_{i})} e^{2\xi_{1}({\partial}_{s},z^{-1}_{j})},\ 
e^{-\xi({\partial}_{t},z^{-1}_{j})}e^{\xi(t,z_{i})}
= \left(1-\frac{z_{i}}{z_{j}}\right)e^{\xi(t,z_{i})}e^{-\xi({\partial}_{t},z^{-1}_{j})}.
\end{align*}
Thus, for $i<j$, we obtain
\begin{equation*}
V^{-}_{0}(z_{j})V^{+}_{0}(z_{i}) = 
\frac{z_{j}-z_{i}}{z_{j}+z_{i}}\left(1-\frac{z_{i}}{z_{j}}\right)V^{+}_{0}(z_{i})V^{-}_{0}(z_{j})
\end{equation*}
as desired. 
\end{proof}
The computation above shows the integral formula.
\begin{prop}\label{int}
{\small
\begin{align*}
&\frac{f^{n}_{0}}{n!}\left(\theta^{m}e^{-m\alpha}\right) \\
&= \frac{(-1)^{\binom{n}{2}+n(m+1)}}{n!}\oint\cdots\oint
\frac{\Delta(z)^{2}}{\displaystyle{\prod _{1\leq i<j\leq n}}(z_{i}+z_{j})}
e^{-\sum^{n}_{i=1}\xi_{1}(s,z_{i})+\sum^{n}_{i=1}\xi(t,z_{i})}
\frac{dz_{1}\cdots dz_{n}} {(z_{1}\cdots z_{n})^{m+1}}\theta^{m+n}e^{(n-m)\alpha}\\
&= \frac{(-1)^{\binom{n}{2}+n(m+1)}}{n!}\oint\cdots\oint
\frac{\Delta(z)^{2}}{\displaystyle{\prod _{1\leq i<j\leq n}}(z_{i}+z_{j})}
e^{\sum^{n}_{i=1}\xi(u,z_{i})}
\frac{dz_{1}\cdots dz_{n}}{(z_{1}\cdots z_{n})^{m+1}}\theta^{m+n}e^{(n-m)\alpha},
\end{align*}
}
where 
$
u_{j} = 
\begin{cases}
t_{j} & (j: even), \\
t_{j}-s_{j} & (j: odd). 
\end{cases}
$
\end{prop}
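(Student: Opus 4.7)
The strategy is a direct application of Lemma 6.2. Since $f_{0}=-\oint z^{-1}V_{0}(z)\,dz$, iterating gives
\begin{equation*}
\frac{f_{0}^{n}}{n!}(\theta^{m}e^{-m\alpha})
=\frac{(-1)^{n}}{n!}\oint\!\!\cdots\!\!\oint
\frac{V_{0}(z_{1})\cdots V_{0}(z_{n})}{z_{1}\cdots z_{n}}
(\theta^{m}e^{-m\alpha})\,dz_{1}\cdots dz_{n}.
\end{equation*}
Because the measure $(z_{1}\cdots z_{n})^{-1}dz_{1}\cdots dz_{n}$ is symmetric in the $z_{i}$'s, I may replace the ordered product by its reverse $V_{0}(z_{n})\cdots V_{0}(z_{1})$, which is precisely the object whose normal-ordered form is supplied by Lemma 6.2.

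Next I substitute that expression and evaluate on $\theta^{m}e^{-m\alpha}$. Three bookkeeping steps appear: (i) $(z_{1}\cdots z_{n})^{H_{0}}$ contributes the factor $(z_{1}\cdots z_{n})^{-m}$; (ii) iterating $e^{\alpha}(\theta^{n}e^{m\alpha})=(-1)^{n}\theta^{n}e^{(m+1)\alpha}$ a total of $n$ times on $\theta^{m}e^{-m\alpha}$ produces the sign $(-1)^{mn}$ and sends the vector to $\theta^{m+n}e^{(n-m)\alpha}$; (iii) the derivative piece $\exp\!\bigl(2\sum\xi_{1}(\partial_{s},z_{i}^{-1})-\sum\xi(\partial_{t},z_{i}^{-1})\bigr)$ acts trivially on $\theta^{m+n}e^{(n-m)\alpha}$ since the latter has no $(t,s)$-dependence, so only the multiplicative factor $\exp\!\bigl(-\sum\xi_{1}(s,z_{i})+\sum\xi(t,z_{i})\bigr)$ survives in front. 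Combining the prefactor $(-1)^{\binom{n}{2}}$ from Lemma 6.2 with the signs $(-1)^{n}$ and $(-1)^{mn}$ yields total sign $(-1)^{n+\binom{n}{2}+mn}=(-1)^{\binom{n}{2}+n(m+1)}$, which is precisely the first formula.

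For the second equality I just unfold $\xi$ and $\xi_{1}$: the coefficient of $z_{i}^{j}$ in $-\xi_{1}(s,z_{i})+\xi(t,z_{i})$ equals $t_{j}-s_{j}=u_{j}$ when $j$ is odd and $t_{j}=u_{j}$ when $j$ is even, so $-\sum_{i}\xi_{1}(s,z_{i})+\sum_{i}\xi(t,z_{i})=\sum_{i}\xi(u,z_{i})$. This rewrites the integrand and finishes the proof.

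The main (and essentially only) obstacle is the sign accounting: the $-$ sign from $f_{0}=-\oint z^{-1}V_{0}(z)\,dz$ taken $n$ times, the $(-1)^{\binom{n}{2}}$ from reordering the zero-modes $V_{0}^{0}(z_{i})$ inside Lemma 6.2, and the $\theta$-twisted sign $(-1)^{mn}$ coming from $e^{n\alpha}$ acting on $e^{-m\alpha}$. Once these are pinned down, the congruence $n+\binom{n}{2}+mn\equiv\binom{n}{2}+n(m+1)\pmod 2$ is immediate, and nothing else in the calculation requires anything beyond the operator identities already recorded.
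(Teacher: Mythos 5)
Your proposal is correct and follows exactly the paper's route: the paper derives the proposition by the single sentence ``The computation above shows the integral formula,'' meaning precisely the combination of $f_{0}=-\oint z^{-1}V_{0}(z)\,dz$ with the normal-ordered product formula for $V_{0}(z_{n})\cdots V_{0}(z_{1})$, evaluated on $\theta^{m}e^{-m\alpha}$. Your sign bookkeeping $(-1)^{n+\binom{n}{2}+mn}=(-1)^{\binom{n}{2}+n(m+1)}$ and the identification $-\xi_{1}(s,z)+\xi(t,z)=\xi(u,z)$ correctly supply the details the paper leaves implicit.
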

\section{Determination of the signs}
In this section we will finalize our proof of the main theorem.
We have to determine the signs $\zeta_{n,m,i}(\lambda)\ (i=0,1)$ of Proposition \ref{zeta}.
To this end, we shall rewrite a state $|\lambda\rangle\ (\lambda \in I_{i}^n(c_{\pm m}),\ m>0)$ into its ``normal form" 
(appearing in the left hand side of Lemma \ref{bs}) along the following operations.
%%%%%
\begin{enumerate}
 \item[(M0)] Rename $\beta$'s into $\phi$'s, $\psi$'s and $\psi^{\ast}$'s.
 \item[(M1)] Rewrite the vacuum $|{\rm vac}\rangle=\psi_{-1}\psi_{-2}\cdots\psi_{-m}|0,-m\rangle$, 
 \item[(M2)] Move the leftmost $\psi^{*}_{j}$ to the (left) neighborhood of $\psi_{-j}$, and 
 remove $\psi^{*}_{j}$ by using the relation $\psi_{j}^{*}\psi_{j}=1-\psi_{j}\psi_{j}^{*}$.
 Do this process until all $\psi_{j}^{*}$'s disappear. 
 \item[(M3)] Move the leftmost $\phi_{j}$ to the (left) neighborhood of the leftmost $\psi_{k}$.
 Do this process until one gets the normal form.
\end{enumerate}
A sign is assigned to each (M$x$); $(-1)^{f(\lambda)}$ to (M0), $(-1)^{\varepsilon_{m}h(\lambda)}$ to (M2), 
and $(-1)^{g(\lambda)}$ to (M3). Note that, when $i=1$, the sign assigned to the sequence of (M1) and (M2)
equals  $(-1)^{\ell(\lambda^{(1)})+\ell(\lambda^{(2)})}=(-1)^{m}$.
\begin{prop}\label{signzeta}
For $\lambda \in I_{i}^n(c_{\pm m})\ (m>0)$, the sign $\zeta_{n,m,i}(\lambda)$ is given by
\begin{align*}
\zeta_{n,m,1}(\lambda)&=(-1)^{f(\lambda)+m}=(-1)^{m+\binom{n}{2}}\delta_{1}(\lambda),\\
\zeta_{n,m,0}(\lambda)&=(-1)^{f(\lambda)+g(\lambda)+\varepsilon_{m}h(\lambda)}=\delta_{0}(\lambda).
\end{align*}
\end{prop}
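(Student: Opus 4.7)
The plan is to execute the four-step normalization procedure (M0)--(M3) described in the paragraph preceding the proposition on a general state $|\lambda\rangle = \beta_{\lambda_1}\cdots\beta_{\lambda_{2k}}|{\rm vac}\rangle$ with $\lambda \in I_i^n(c_{\pm m})$, tally the sign contributed at each step, and match the product with $\delta_i(\lambda)$. Step (M0) uses the renaming (\ref{3fer}): for $\lambda_k \equiv 0,1 \pmod{3}$ the rewrite $\beta_{\lambda_k} = \phi_{\lambda_k/3}$ or $\psi_{(\lambda_k-1)/3}$ carries no sign, whereas for $\lambda_k \equiv 2 \pmod{3}$ the formula $\psi^*_n = (-1)^{-3n-1}\beta_{-3n-1}$ gives $\beta_{\lambda_k} = (-1)^{\lambda_k}\psi^*_{-(\lambda_k+1)/3}$. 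Multiplying over all parts of $\lambda^{(2)}$ yields the factor $(-1)^{|\lambda^{(2)}|} = (-1)^{f(\lambda)}$. Step (M1) is the tautological expansion of the vacuum and contributes no sign of its own.

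In step (M2), each $\psi^*_j$ generated in (M0) must be shifted past the intervening operators until it meets its partner $\psi_j$ from the vacuum expansion, after which the pair is absorbed via $\psi^*_j\psi_j = 1-\psi_j\psi^*_j$ (the second term dies against $|0,-m\rangle$ since its index lies outside the allowed range). Counting fermionic transpositions, the combined parity of (M1) and (M2) collapses to $(-1)^{\ell(\lambda^{(1)})+\ell(\lambda^{(2)})} = (-1)^m$ in case $i=1$ (since all parts of $\lambda$ are congruent to $1$ or $2 \pmod 3$, forcing $\ell(\lambda^{(1)})+\ell(\lambda^{(2)}) = m$), and to $(-1)^{\varepsilon_m h(\lambda)}$ in case $i=0$ (the nontrivial contribution appearing only for odd $m$, arising from the trailing $\psi_{-m}$ that must be moved past the $h(\lambda)$ operators coming from $\lambda^{(2)}$). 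Step (M3) slides each remaining $\phi$ leftward into the normal-form position of Lemma \ref{bs}. For a $\phi$ stemming from the part $\lambda_i^{(0)}$, the number of $\psi$'s it must swap past is exactly $\#\{j \mid \lambda^{(1)}_j > \lambda^{(0)}_i\}$, so the aggregate sign of (M3) equals $(-1)^{g(\lambda)}$.

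Assembling the three contributions produces $\zeta_{n,m,1}(\lambda) = (-1)^{f(\lambda)+m}$ (using $g(\lambda)=0$, since $\lambda^{(0)}=\emptyset$ when $i=1$) and $\zeta_{n,m,0}(\lambda) = (-1)^{f(\lambda)+g(\lambda)+\varepsilon_m h(\lambda)}$. Direct comparison with the definitions of $\delta_1$ and $\delta_0$ (for the latter, splitting by the parity of $m$ so that $\varepsilon_m h(\lambda)$ becomes $h(\lambda)$ or $0$) gives the claimed identifications, modulo the overall factor $(-1)^{m+\binom{n}{2}}$ in case $i=1$. The main obstacle I expect is the careful sign bookkeeping in step (M2): one must pin down the precise interleaving of $\psi^*$'s, $\psi$'s, and $\phi$'s in the original word, handle the dependence on whether $m$ is even or odd, and verify the asymmetric roles played by $\lambda^{(1)}$ and $\lambda^{(2)}$ in each case. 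The sign counts in (M0) and (M3) are comparatively mechanical, following directly from the definitions of $f$ and $g$.
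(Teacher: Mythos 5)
Your proposal is correct and follows essentially the same route as the paper: the paper's own justification of Proposition \ref{signzeta} consists precisely of the normalization procedure (M0)--(M3) with the sign tallies $(-1)^{f(\lambda)}$, $(-1)^{\varepsilon_m h(\lambda)}$ (collapsing with (M1) to $(-1)^m$ when $i=1$, since then $\lambda^{(0)}=\emptyset$ and $\ell(\lambda)=m$), and $(-1)^{g(\lambda)}$, followed by the comparison with $\delta_i(\lambda)$. Your derivations of the (M0) and (M3) contributions, and your flagged caution about the (M2) bookkeeping, match what the paper does (the paper itself only illustrates (M2) by example rather than proving the parity count in general).
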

\begin{ex}Take
$\lambda = (20,18,16,12,8,7,2,0)\in I^{6}_{0}(c_{-7})$. Then $f(\lambda)=30,$ $g(\lambda)=3$ and $h(\lambda)=3$. 
\begin{align*}
|\lambda\rangle 
&=
 \beta_{20}\beta_{18}\beta_{16}\beta_{12}\beta_{8}\beta_{7}\beta_{2}\beta_{0}|{\rm vac}\rangle\\
&= 
(-1)^{20+8+2}\underline{\psi^{\ast}_{-7}}
\phi_{6}\psi_{5}\phi_{4}\psi^{*}_{-3}\psi_{2}\psi^{*}_{-1}\phi_{0}
\psi_{-1}\psi_{-2}\psi_{-3}\psi_{-4}\psi_{-5}\psi_{-6}\psi_{-7}|0,-7\rangle \\
&= 
(-1)^{30}(-1)
\phi_{6}\psi_{5}\phi_{4}\underline{\psi^{*}_{-3}}\psi_{2}\psi^{*}_{-1}\phi_{0}
\psi_{-1}\psi_{-2}\psi_{-3}\psi_{-4}\psi_{-5}\psi_{-6}|0,-7\rangle \\
&= 
(-1)^{30}(-1)^{1+1}
\phi_{6}\psi_{5}\phi_{4}\psi_{2}\underline{\psi^{*}_{-1}}\phi_{0}
\psi_{-1}\psi_{-2}\psi_{-4}\psi_{-5}\psi_{-6}|0,-7\rangle \\
&= 
(-1)^{30}(-1)^{1+1+1}
\phi_{6}\psi_{5}\phi_{4}\psi_{2}\phi_{0}
\psi_{-2}\psi_{-4}\psi_{-5}\psi_{-6}|0,-7\rangle \\
&= 
(-1)^{30+3}
\phi_{6}\psi_{5}\underline{\phi_{4}}\psi_{2}\phi_{0}
\psi_{-2}\psi_{-4}\psi_{-5}\psi_{-6}|0,-7\rangle \\
&= 
(-1)^{30+3}(-1)
\phi_{6}\phi_{4}\psi_{5}\psi_{2}\underline{\phi_{0}}
\psi_{-2}\psi_{-4}\psi_{-5}\psi_{-6}|0,-7\rangle \\
&= 
(-1)^{30+3}(-1)^{1+2}
\phi_{6}\phi_{4}\phi_{0}
\psi_{5}\psi_{2}
\psi_{-2}\psi_{-4}\psi_{-5}\psi_{-6}|0,-7\rangle.
\end{align*}
\end{ex}
Combining Propositions \ref{p44} (1), \ref{p51}, \ref{zeta} (1), \ref{p64} with \ref{signzeta}
one comes to Theorem \ref {main} (1).  In the formula 
\begin{align*}
&\sum_{\lambda \in I_{0}^{n}(c_{-m})}\delta_{0}(\lambda)Q_{\lambda[0]}(s)S_{\lambda[1]}(t)\\
&= \frac{(-1)^{\binom{n+m+1}{2}}}{n!}\oint\cdots\oint
\frac{\Delta(z)^{2}}{\displaystyle{\prod _{1\leq i<j\leq n}}(z_{i}+z_{j})}
e^{-\sum^{n}_{i=1}\xi_{1}(s,z_{i})+\sum^{n}_{i=1}\xi(t,z_{i})}
\frac{dz_{1}\cdots dz_{n}}{(z_{1}\cdots z_{n})^{m+1}}\\
&=\frac{(-1)^{\binom{n+m+1}{2}}}{n!}\oint\cdots\oint
\frac{\Delta(z)^{2}}{\displaystyle{\prod _{1\leq i<j\leq n}}(z_{i}+z_{j})}
e^{\sum^{n}_{i=1}\xi(u,z_{i})}
\frac{dz_{1}\cdots dz_{n}}{(z_{1}\cdots z_{n})^{m+1}},
\end{align*}
which comes from Proposition \ref{int}, one puts $s=0$ and $t=u$ to get  
$$\sum _{\mu\in I^{n}_{0}(c_{-m}),\mu[0] = \emptyset }\delta_{0}(\mu)S_{\mu[1]}(u).$$
This is nothing but Theorem \ref{main} (2).

We conclude the paper with a new formula of the
``trapezoidal" $Q$-functions.
 If we put $t_{2j}=u_{2j}=0$, then the right hand side of Theorem \ref{main}
(2) reduces to a single $Q$-function.
For non-negative integers $m$ and $n$ such that $m-n+1 \geq 0$,
we denote by $\Delta(m,n)$  the strict partition $(m,m-1,\ldots,m-(n-1))$
of length $n$, which we call a trapezoid.
\begin{thm}
Let $m$ and $n$ be non-negative integers such that $m-n+1\geq 0$. We have
$$Q_{\Delta(m,n)}(u)=(-1)^{\frac{(m+1)(m+2n)}{2}}\sum _{\mu\in I^{n}_{0}(c_{-m}),\mu[0] = \emptyset }\delta_{0}(\mu)S^{odd}_{\mu[1]}(u),$$
where $S^{odd}_{\lambda}(u)=S_{\lambda}(u)\big|_{u_{2j}\mapsto 0}$.
\end{thm}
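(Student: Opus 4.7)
My plan is to specialize the contour-integral identity displayed immediately before the theorem (which comes from Proposition \ref{int} via Theorem \ref{main}(2)) to $u_{2j}=0$, and then to evaluate the resulting integral as $Q_{\Delta(m,n)}(u)$ by means of the standard vertex-operator representation of Schur $Q$-functions.

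Setting $u_{2j}=0$ turns the left-hand side into $\sum\delta_0(\mu)S^{odd}_{\mu[1]}(u)$ and collapses $\xi(u,z)$ to $\xi_1(u,z)$ in the exponential on the right. I factor the integrand as
\[
\frac{\Delta(z)^2}{\prod_{i<j}(z_i+z_j)}=\Delta(z)\prod_{i<j}\frac{z_j-z_i}{z_i+z_j},
\]
and observe that $\Delta(z)/(z_1\cdots z_n)^{m+1}=\det(z_i^{-\lambda_j-1})$ for $\lambda=\Delta(m,n)$, since $\lambda_j=m+1-j$ makes the $(i,j)$-entry equal to $z_i^{j-m-2}$. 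Expanding this determinant as $\sum_\sigma\mathrm{sgn}(\sigma)\prod_iz_i^{-\lambda_{\sigma(i)}-1}$ and exploiting the antisymmetry of $\prod_{i<j}(z_j-z_i)/(z_i+z_j)$ in $(z_1,\ldots,z_n)$ collapses the permutation sum into $n!$ identical integrals. Using $\prod_{i<j}(z_j-z_i)/(z_i+z_j)=(-1)^{\binom{n}{2}}\prod_{i<j}(z_i-z_j)/(z_i+z_j)$, the right-hand side becomes
\[
(-1)^{\binom{n+m+1}{2}+\binom{n}{2}}\oint\cdots\oint\prod_{i<j}\frac{z_i-z_j}{z_i+z_j}\prod_ie^{\xi_1(u,z_i)}\prod_iz_i^{-\lambda_i-1}\,dz_i.
\]

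The remaining integral equals $Q_{\Delta(m,n)}(u)$, a consequence of Lemma \ref{bs} with $r=0$, $m=0$: a standard OPE calculation based on the bosonization of $\phi(z)$ from the start of Section 6 gives
\[
\Phi\phi(z_1)\cdots\phi(z_n)|0,0\rangle=2^{-n/2}\prod_{i<j}\frac{z_i-z_j}{z_i+z_j}\prod_ie^{\xi_1(s,z_i)}\theta^n,
\]
and extracting the coefficient of $\prod_iz_i^{-\lambda_i-1}$ and comparing with Lemma \ref{bs} yields the desired integral representation. Finally,
\[
\binom{n+m+1}{2}+\binom{n}{2}-\tfrac{(m+1)(m+2n)}{2}=n^2-n=n(n-1)\equiv 0\pmod 2
\]
shows the total sign equals $(-1)^{(m+1)(m+2n)/2}$, completing the proof. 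The main obstacle is careful sign-tracking through the determinantal reduction and the vertex-operator formula; the rest is direct manipulation or appeal to previously established results.
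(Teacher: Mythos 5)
Your proposal is correct and follows essentially the same route as the paper: specialize the contour-integral identity at $u_{2j}=0$, peel off $\det(z_i^{-\lambda_j-1})$ from $\Delta(z)/(z_1\cdots z_n)^{m+1}$, collapse the permutation sum by antisymmetry, and recognize the remaining integral as the vertex-operator (Jing) expression for $Q_{\Delta(m,n)}$; your final parity check $\binom{n+m+1}{2}+\binom{n}{2}\equiv\frac{(m+1)(m+2n)}{2}\pmod 2$ is also exactly what the paper's signs require. The only difference is cosmetic: you rederive the $Q$-function integral formula from the bosonization of $\phi(z)$ and Lemma \ref{bs}, where the paper simply cites Jing and Macdonald.
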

\begin{proof}
We put $\Delta_{-1}(z)=\frac{{\prod _{1\leq i<j\leq n}}(z_{i}-z_{j})}{{\prod _{1\leq i<j\leq n}}(z_{i}+z_{j})}$.
%We denote by $P^{\lambda}(z)=P^{\lambda}(z_{1},\ldots,z_{n})$  
%Schur's $P$-polynomial indexed by a partition (\cite{mac}).
%These $P$-polynomials satisfy the following orthogonality relation:
%\begin{align*}
%\frac{(-1)^{\binom{n}{2}}}{n!}\oint\cdots\oint
%\Delta_{-1}(z)^{2}P^{\lambda}(z)P^{\mu}(z^{-1})
%\frac{dz_{1}\cdots dz_{n}}{z_{1}\cdots z_{n}}=\delta_{\lambda\mu},
%\end{align*}
%where $\Delta_{-1}(z)=\frac{{\prod _{1\leq i<j\leq n}}(z_{i}-z_{j})}{{\prod _{1\leq i<j\leq n}}(z_{i}+z_{j})}$. 
%The Cauchy identity for the $Q$- and $P$-functions is given by
%$$e^{\sum_{i=1}^{n}\xi_{1}(u,z_{i})}=\sum_{\lambda:{\rm strict}}Q_{\lambda}(u)P^{\lambda}(z).$$
The main theorem enables us to compute the following:
\begin{align*}
&(-1)^{\binom{n+m+1}{2}}\sum _{\mu\in I^{n}_{0}(c_{-m}),\mu[0] = \emptyset }\delta_{0}(\mu)S_{\mu[1]}(u)\Big|_{u_{2j}\mapsto 0}\\
&=\frac{1}{n!}
\oint\cdots\oint
\frac{\Delta(z)^{2}}{\displaystyle{\prod _{1\leq i<j\leq n}}(z_{i}+z_{j})}
e^{\sum^{n}_{i=1}\xi(u,z_{i})}
\frac{dz_{1}\cdots dz_{n}}{(z_{1}\cdots z_{n})^{m+1}}\Big|_{u_{2j}\mapsto 0}\\
&=\frac{1}{n!}
\oint\cdots\oint
{\Delta_{-1}(z)}
e^{\sum^{n}_{i=1}\xi_{1}(u,z_{i})}
\frac
{{(-1)^{\binom{n}{2}}}\Delta(z^{-1})}
{(z_{1}\cdots z_{n})^{m-n+1}}
%e^{\sum^{n}_{i=1}\xi_{1}(u,z_{i})}
\frac{dz_{1}\cdots dz_{n}}{z_{1}\cdots z_{n}}\\
&=\frac{{(-1)^{\binom{n}{2}}}}{n!}
\oint\cdots\oint
{\Delta_{-1}(z)}
e^{\sum^{n}_{i=1}\xi_{1}(u,z_{i})}
\sum_{\sigma\in S_{n}}{\rm{sgn}}(\sigma)
 z_{\sigma(1)}^{-m}z_{\sigma(2)}^{-m+1}\cdots z_{\sigma(n)}^{-m+n-1}
\frac{dz_{1}\cdots dz_{n}}{z_{1}\cdots z_{n}}\\
&=\frac{{(-1)^{\binom{n}{2}}}}{n!}
\sum_{\sigma\in S_{n}}{\rm{sgn}}(\sigma)
\oint\cdots\oint
{\Delta_{-1}(z)}
e^{\sum^{n}_{i=1}\xi_{1}(u,z_{i})}
 z_{\sigma(1)}^{-m}z_{\sigma(2)}^{-m+1}\cdots z_{\sigma(n)}^{-m+n-1}
%e^{\sum^{n}_{i=1}\xi_{1}(u,z_{i})}
\frac{dz_{1}\cdots dz_{n}}{z_{1}\cdots z_{n}}\\
&=\frac{{(-1)^{\binom{n}{2}}}}{n!}
\sum_{\sigma\in S_{n}}{\rm{sgn}}(\sigma)
Q_{\sigma(m,m-1,\ldots,m-n+1)}(u)\\
&={(-1)^{\binom{n}{2}}}Q_{\Delta(m,n)}(u).
\end{align*}
Here we use a vertex operator expression of $Q$-functions and 
$$Q_{\sigma(\lambda)}(z_{1},\ldots,z_{n}):=Q_{(\lambda_{\sigma(1)},\lambda_{\sigma(2)},\ldots,\lambda_{\sigma(n)})}(z_{1},\ldots,z_{n})={\rm sgn}(\sigma) Q_{\lambda}(z_{1},\ldots,z_{n})$$
 for $\sigma \in S_{n}$ and a strict partition $\lambda=(\lambda_{1},\lambda_{2},\ldots,
\lambda_{n})$ 
 (\cite{jing,mac}).
\end{proof}

\end{document}